\newlength{\mysidemargin}
\newlength{\mytopmargin}
\begin{document}


\newcommand{\hogehoge}[1]{ \bigskip \noindent ********** \\ {#1} \\ ********** \bigskip }


\newtheorem{theo}{ {\bf Theorem} }
\newtheorem{lemm}[theo]{ {\bf Lemma} }
\newtheorem{prop}[theo]{ {\bf Proposition} }
\newtheorem{cor}[theo]{ {\bf Corollary} }
\newtheorem{asse}{ {\bf Assertion}}    

\newtheorem{defn}{{\bf  Definition}}    
\newtheorem{rem}{{\bf  Remark}}    

\renewcommand{\proofname}{{\rm{Proof.}}} 


\newcommand{\NN}{{\mathbb N}}
\newcommand{\ZZ}{{\mathbb Z}}
\newcommand{\QQ}{{\mathbb Q}}
\newcommand{\RR}{{\mathbb R}}
\newcommand{\CC}{{\mathbb C}}
\newcommand{\HH}{{\mathbb H}}


\newcommand{\im}{\mathop{\rm{Im}}\nolimits}

\newcommand{\lcm}{\mathop{\rm{lcm}}\nolimits}

\newcommand{\etp}{{\mathbf{e}}}

\newcommand{\hol}{\mathop{\rm{Hol}}\nolimits}

\newcommand{\diag}{\mathop{\rm{diag}}\nolimits}   


\newcommand{\SL}{{\mathrm S \mathrm L}}
\newcommand{\GL}{{\mathrm G \mathrm L}}
\newcommand{\SP}{{\mathrm S \mathrm p}}
\newcommand{\MT}{{\mathrm M}}


\newcommand{\ABCD}{\begin{pmatrix} A & B \\ C & D \end{pmatrix}}
\newcommand{\OEEOG}{\begin{pmatrix} O_g & -E_g \\ E_g & O_g \end{pmatrix}}
\newcommand{\OEEO}{\begin{pmatrix} O_2 & -E_2 \\ E_2 & O_2 \end{pmatrix}}

\newcommand{\ELMa}{\begin{pmatrix} -1&0&0&0\\0&1&0&0\\0&0&-1&0\\0&0&0&1 \end{pmatrix}}
\newcommand{\ELMb}{\begin{pmatrix} 0&1&0&0\\1&0&0&0\\0&0&0&1\\0&0&1&0 \end{pmatrix}}
\newcommand{\ELMc}{\begin{pmatrix} 1&0&0&0\\x&1&0&0\\0&0&1&-x\\0&0&0&1 \end{pmatrix}}
\newcommand{\ELMd}{\begin{pmatrix} 1&x&0&0\\0&1&0&0\\0&0&1&0\\0&0&-x&1 \end{pmatrix}}

\newcommand{\ELMe}{\begin{pmatrix} a&0&b&0\\0&1&0&0\\c&0&d&0\\0&0&0&1 \end{pmatrix}}
\newcommand{\ELMf}{\begin{pmatrix} 1&0&0&0\\0&a&0&b\\0&0&1&0\\0&c&0&d \end{pmatrix}}

\newcommand{\abcd}{\begin{pmatrix} a & b \\ c & d \end{pmatrix}}

\newcommand{\tzzo}{\begin{pmatrix} \tau & z \\ z & \omega \end{pmatrix}}
\newcommand{\tnno}{\begin{pmatrix} \tau & 0 \\ 0 & \omega \end{pmatrix}}


\newcommand{\Gcenter}[2]{\dimen0=\ht\strutbox\advance\dimen0\dp\strutbox\multiply\dimen0
by#1\divide\dimen0 by2\advance\dimen0 by-.5\normalbaselineskip\raisebox{-\dimen0}[0pt][0pt]{#2}}



\newcommand{\M}{{\mathrm M}}
\newcommand{\LCM}{{\mathrm L\mathrm C \mathrm M}}

\def\C{{\mathbb C}}
\def\FF{{\mathbb F}}
\def\HH{{\mathbb H}}
\def\N{{\mathbb N}}
\def\P{{\mathbb P}}
\def\Q{{\mathbb Q}}
\def\R{{\mathbb R}}
\def\Z{{\mathbb Z}}
\def\E{{\mathcal E}}


\newcommand{\Sym}{{\mathrm S \mathrm y \mathrm m}}

\newcommand{\frct}{\frac{a \tau + b}{c \tau +d}}
\newcommand{\frco}{\frac{a \omega + b}{c \omega +d}}

\newcommand{\LES}[3]{0 & \to & {#1} & \to & {#2} &
                     \stackrel{ P_1 (r) }{ \longrightarrow } & {#3}}
\newcommand{\LEs}[3]{0 & \!\!\to\!\! & {#1} & \!\!\to\!\! & {#2} &
                     \!\!\stackrel{ P_2 (s) }{ \longrightarrow }\!\! & {#3}}
\newcommand{\Les}[3]{0 & \to & {#1} & \to & {#2} &
                     \stackrel{ P_3 (s)}{ \longrightarrow } & {#3}}
\newcommand{\TXT}[2]{( \text{{#1}} {#2} )}
\newcommand{\CAE}[1]{({#1} \text{ even})}
\newcommand{\CAO}[1]{({#1} \text{ odd})}
\newcommand{\CSE}{{\CAE{s}}}
\newcommand{\CSO}{{\CAO{s}}}
\newcommand{\PDL}[2]{{\frac{\partial {#1}}{\partial {#2}}}}
\newcommand{\BBACK}{\!\!\!\!\!\!}


\newcommand{\MVA}{{\mathit A}}
\newcommand{\MVB}{{\mathrm W}}
\newcommand{\MVC}{{\mathit A}}

\newcommand{\MVAa}[1]{\MVA_{#1}}
\newcommand{\MVBa}[1]{\MVB_{#1}}
\newcommand{\MVCa}[1]{\MVC_{#1}}

\newcommand{\MVAG}{\MVAa{*} (\Gamma)}
\newcommand{\MVAGI}{\MVA_*^{\natural} (\Gamma)}
\newcommand{\MVAGa}[1]{\MVA_{#1} (\Gamma)}
\newcommand{\MVAGb}[2]{\MVA_{#1} (\Gamma ; {#2})}
\newcommand{\MVAGV}[1]{\MVA_{*,{#1}} (\Gamma)}

\newcommand{\MVAGC}{\MVAa{*} (\Gamma,\psi)}
\newcommand{\MVAGCa}[1]{\MVA_{#1} (\Gamma,\psi)}
\newcommand{\MVAGCb}[2]{\MVA_{#1} (\Gamma,\psi ; {#2})}
\newcommand{\MVAGCc}[2]{\MVA_{#1} (\Gamma,{#2})}
\newcommand{\MVAGCV}[1]{\MVA_{*,{#1}} (\Gamma,\psi)}

\newcommand{\MVBG}{\MVBa{*} (\Gamma ')}
\newcommand{\MVBGa}[1]{\MVB_{#1} (\Gamma ')}
\newcommand{\MVBGas}[1]{\MVB_{#1}^{{\mathrm{sym}}} (\Gamma ')}
\newcommand{\MVBGaa}[1]{\MVB_{#1}^{{\mathrm{skew}}} (\Gamma ')}
\newcommand{\MVBGb}[2]{\MVB_{#1} (\Gamma ' ; {#2})}
\newcommand{\MVBGbs}[2]{\MVB_{#1}^{{\mathrm{sym}}} (\Gamma ' ; {#2})}
\newcommand{\MVBGba}[2]{\MVB_{#1}^{{\mathrm{skew}}} (\Gamma ' ; {#2})}
\newcommand{\MVBGCa}[1]{\MVB_{#1} (\Gamma ' , \psi ' )}
\newcommand{\MVBGCas}[1]{\MVB_{#1}^{{\mathrm{sym}}} (\Gamma ' , \psi ' )}
\newcommand{\MVBGCaa}[1]{\MVB_{#1}^{{\mathrm{skew}}} (\Gamma ' , \psi ' )}
\newcommand{\MVBGCb}[2]{\MVB_{#1} (\Gamma ' , \psi ' ; {#2})}
\newcommand{\MVBGCbs}[2]{\MVB_{#1}^{{\mathrm{sym}}} (\Gamma ' , \psi ' ; {#2})}
\newcommand{\MVBGCba}[2]{\MVB_{#1}^{{\mathrm{skew}}} (\Gamma ' , \psi ' ; {#2})}

\newcommand{\MVCG}{\MVCa{*} (\Gamma ')}
\newcommand{\MVCGa}[1]{\MVC_{#1} (\Gamma ')}
\newcommand{\MVCGb}[2]{\MVC_{#1} (\Gamma ' ; {#2})}
\newcommand{\MVCGCa}[1]{\MVC_{#1} (\Gamma ' , \psi ' )}
\newcommand{\MVCGCb}[2]{\MVC_{#1} (\Gamma ' , \psi ' ; {#2})}


\title{The Skew-growth function on\\ the
 Monoid of square matrices}

\author{Kyoji Saito}

\begin{abstract}
\!\!\!\!\!We study an elementary divisibility theory for the monoid
 $\M(n,R)^\times\!\!$ $:=\!\! \{X\! \in\! \M(n,R)\! \mid \! \det(\!X)\!
 \not =\! 0\!\}$, where $R$ is a principal ideal domain and $\M(n,R)$ is
 the ring of $n$-by-$n$ matrices with coefficients in $R$. We prove that
 {\it any finite subset of $\M(n,R)^\times$ 
  has the right least common multiple} up to a left unit factor.

As an application, we consider the signed generating series, denoted by 
{\footnotesize $N_{\M(n,R)^\times,\deg}(t)$} and called the skew-growth function, of least common multiples
 of all finite sets of irreducible elements of $\M(n,R)^\times$, if $R$ is residue finite.
Then, using the divisibility theory, 
we show  the Euler product decomposition:

\medskip
\noindent
\ \ \ \ \ \ {\footnotesize $N_{\M(n,R)^\times\!,\deg}(\exp(\!-\!s))\! =\!\! \!\! \underset{p\in \!\{\! \text{primes of} R\!\}}{\prod}\!\!\!\! \!\! (1\!-\!N(p)^{-s})(1\!-\!N(p)^{-s\!+\!1}\!)\cdots(1\!-\!N(p)^{-s\!+\!n\!-\!1}\!) $}

\smallskip
\noindent
Here 
 $N(p):=\#(R/(p))$ is the {\it absolute norm} of $p\!\in\! R$
 {\tiny (there is an unfortunate coincidence of notation "$N$" for the  absolute norm and for the skew growth function [S4]).
 }
\end{abstract}

\maketitle 

\vspace{-1.0cm}
\tableofcontents
 \vspace{-1.0cm}
\footnote
{{\bf \small Acknowledgement:} The author is grateful to professors
Akio Fujii,  Masatoshi Suzuki, Satochi Kondo and Keiich Watanabe  for
informing the author some references and to Kazuhiko Kurano for a proof
of Fact in \S6. He also expresses  his gratitude to Scott Carnahan and to the referee
for the careful reading of the manuscript and valuable suggestions.
}

\vspace{-0.8cm}

 \section{\bf \large     Introduction} 


To any pair $(\M,\deg)$ of a cancellative monoid $\M$ and a degree map
 $\deg$ on $\M$, we associate the  {\it skew growth function}
 $N_{\M,\deg}(t)$ [S4]  in order to study certain thermo-dynamical limit
 functions [S2,3]. In the present paper, we study a particular example
 of a monoid  $\M(n,R)^\times\!:=\! \{ X\in \M(n,R) \mid
 \det(X)\not=0\}$ of square matrices of size $n\in \Z_{>0}$ with
 coefficients in a
 residue finite principal ideal domain $R$.   

For the purpose, we develop a divisibility theory of the monoid in a style  similarly to Artin monoids [B-S],\! and  show the existence of the right/left least common multiple for any finite subset of $\M(n,R)^\times$. 
Then the skew growth functions turns out to be a signed generating
series of the least common multiples of irreducible elements [S4]. Using further a phenomenon on the divisibility theory of $\M(n,R)^\times$, called  the  {\it jumps of  levels of irreducible elements} in  $\M(n,R)^\times$, 
 we  show that the skew-growth function  $N_{\M(n,R)^\times}(\exp(-s))$
 decomposes into an Euler product.

Let us explain this more precisely. 
For two elements $A,B\in
\M(n,R)^\times$, we say, as usual,  {\it $A$ divides $B$ from the left} or {\it $B$ is a right-multiple of $A$}, 
denoted by $A|_lB$,
if there exists $C\!\in\! \M(n,R)^\times$ such that $AC\!=\!B$. We say
$A$ is {\it left equivalent} to $A'\!\in \! \M(n,R)^\times$, denoted by
$A\! \sim_l\!  A'$, if $A|_lA'$ and $A'|_lA$ (which is equivalent to
an existence of an invertible matrix $E\in \GL(n,R)$ such that
$A'=AE$). Then the  division
relation  $A|_lB$ depends only on their left equivalence classes of $A$
and $B$, denoted by $[A],[B]$, in $\M(n,R)^\times/\!\! \sim_l$. 
In \S3, we introduce a normal form for each equivalence class, 
and denote by $M_n$ the set of all normal forms.

For a given finite
subset of $J$ of $\M(n,R)^\times$: i) we call an element
$L\in\M(n,r)^\times$ a {\it left-common multiple} of $J$ if $X|_l L$
for all $X\in J$ and ii) we call a left common multiple $L$ of $J$ a {\it least
left-common multiple of $J$} if it divides any (other) left-common multiple
of $J$ (for simplicity, we shall omit ``left''). 
The first main result of the present paper (\S5 Theorem 4) is the
following existence theorem.

\medskip
\noindent
{\bf Theorem 4.} {\it Any finite subset of $\M(n,R)$ admits a
least common multiple.}

\medskip
We shall denoted by $\LCM(J)$ the normal form of the unique equivalence
class of all least common multiples of $J$. 
The proof of
Theorem 4 is reduced to the case when $J$ consists only of two
elements, say $X$ and $Y$. In \S4 Theorem 3, we prove:

\medskip
\noindent
{\bf Theorem 3.} {\it  
There exists a unique map   
\vspace{-0.1cm}
 \[
 \sigma : \M(n,R)^\times \ \times\ M_n
 \to M_n , \ \  (Y,X) \mapsto \sigma_Y(X)
\vspace{-0.1cm}
\]
 such that  for any   $X\in M_n$ and $Y,Z\in  \M(n,R)^\times$, 
one has the equivalence 
\vspace{-0.1cm}
\[
 (*) \qquad\qquad \qquad\qquad \qquad
X\  |_l \ YZ    \quad     \Longleftrightarrow  \quad \sigma_Y(X)\  |_l\ Z  .
\qquad\qquad \qquad\qquad\qquad
\]
}
\ \ Then, $\LCM(X,Y)$ is given by the normal form of $Y\sigma_Y(X)$.\footnote{
\ We remark that Theorem 3 is formulated almost parallel with the key Lemma 3.1 in the
divisibility theory in Artin monoids ([B-S]) with respect to a
dictionary: $X\!\in\! M_n \!\leftrightarrow \! a\in I\!=$\{generators\},
$\sigma_Y(X) \!\leftrightarrow \! b$, $Y \! \leftrightarrow \! C$, $Z\!
\leftrightarrow \! D$, except for the difference that  Theorem 3 claims an equivalence "$\Leftrightarrow$" whereas  Lemma 3.1 claims only one implication "$a|_lCD\Rightarrow b|_lD$".
} 
The proof of Theorem 3 in \S3
 involves with irreducible decompositions of $X$ and $Y$ (Appendix).

\medskip
We switch our attention to the growth and skew-growth functions of the
monoid $\M(n,R)$, when $R$ is residue finite, i.e.\
$\#(R/(m))\!<\!\infty$ for all $m\!\in\! R\!\setminus\!\{0\}$. Namely,
using the absolute norm given by $N\! :\! R\! \setminus\! \{0\} \! \to\!  \Z_{>0},  \ m\! \mapsto\! N(m)\! :=\! \#(R/(m))$, we define the  degree  map:  
$ 
 X\! \in\!  \M(n,R)^\times \! \mapsto\!  \deg(X)\! :=\! \log(N(\det(X))) \! \in\! \R_{\ge0}.
$
Then, the growth and the  skew-growth functions (in simplified form [S4]) are given by
\vspace{-0.1cm}
\[
\begin{array}{rlll}
 P_{\M(n,R)^\times,\deg}(t) &:=& \sum_{[X]\in \M(n,R)^\times/\!\sim_l} t^{\deg([X])}\\
\vspace{-0.05cm}
 N_{\M(n,R)^\times,\deg}(t) &:=& \sum_{J: \text{ 
finite subset of } I_0} (-1)^{\#J} \ t^{\deg(\LCM(J))}
\end{array}
\]
where $I_0\!:=\!$ 
{\{left equivalence classes of all irreducible elements in $\M(n,R)^\times$\}}.  
Using the level structure on $\M(n,R)^\times$ in \S3, it is easy to get  the expression
$P_{\M(n,R)^\times,\deg}(\exp(\!-\!s))\! =\! \zeta_R(s)\zeta_R(s\!-\!1) \cdots \zeta_R(s\!-\!n\!+\!1)$,
where $\zeta_R(s)$ is the Dedekind zeta-function for $R$ (c.f.\ [Si][Ko]). 
Combining this with the inversion formula 
$P_{\M(n,R)^\times,\deg}(t) \cdot N_{\M(n,R)^\times,\deg}(t)\! =\!1$ ([S4]) and the Euler product 
of $\zeta_R(s)$, 
we get 
\[
\begin{array}{rlll}
\vspace{-0.05cm}
 N_{\M(n,R)^\times,\deg}(\exp(-s)) & =& \!\!\!\!\!\!\!\!\!\! \underset{p: \text{ prime of }R/units}\prod\!\!\!\!\!\!\!\!\!\!\!\!
(1\!-\!N(p)^{-s})
(1\!-\!N(p)^{-s\!+\!1})\!\cdots\! (1\!-\!N(p)^{-s\!+\!n\!-\!1}).
\end{array}
 \vspace{-0.1cm}
\]
However, as the second main result of the present paper, we prove this
formula directly in \S6,  using neither the inversion formula nor the Euler product of $\zeta_R(s)$, but using only the structure of the least common multiples on $\M(n,R)^\times$,
where, in the proof,  the jump of levels among $p$-irreducibles, introduced in \S5, is used
essentially to show some big cancellation of terms (see \S6, {\bf 7)}).
\footnote
{\ It is curious to compare this case with the skew growth
functions of Artin monoids, which are, conjecturally, irreducible polynomials over $\Z$ up to a factor $1\! -\!t$ ([S1]).}


\section{\bf \large     Monoid $\M(n,R)^\times$ and its irreducible elements}  
Let $R$ be a principal ideal domain. For any given positive integer $n\!\in\! \Z_{>0}$, consider the set of all square
matrices of size $n$ with non-zero determinant:
\[
 \M(n,R)^\times:=\{ X\in \M(n,R) \mid \det(X)\not=0 \}.
\]
The set $\M(n,R)^\times$ forms a monoid (i.e.\ a semi-group with
the unit $1_n$) with respect to the matrix product. 
Since $\M(n,R)^\times$ is embedded into
the group $\GL(n,\mathcal{F}(R))$ for $\mathcal{F}(R)=$the fractional field
of $R$, the monoid is cancellative,  that is, $AXB\!=\!AYB$ implies $X\!=\!Y$ for all $A,B,X,Y\in \M(n,R)^\times$. 

The set of all invertible elements in  $\M(n,R)^\times$ is given by
\[
\GL(n,R):=\{ X\in \M(n,R) \mid \det(X)\in \mathcal{E} \},
\]
where $\mathcal{E}$ is the unit group of $R$. An element $X\in \M(n,R)^\times$ is called {\it irreducible} if $X=YZ$
for $Y,Z\in  \M(n,R)^\times$ implies either $Y$ or $Z$ belongs to
$\GL(n,R)$. 

Let us show an elementary fact, which we use constantly in
the present paper.

\begin{lemm} An element $X\!\in\! \M(n,R)^\times$ is irreducible \ $\Leftrightarrow$  \
$\det(X)\! \in\! R$
is a  prime.
\end{lemm}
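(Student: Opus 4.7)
The plan is to prove the two directions separately, with the easy direction following from multiplicativity of the determinant and the hard direction relying on the Smith normal form of $X$.

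For the easy direction ($\Leftarrow$), suppose $\det(X)$ is a prime of $R$ and $X = YZ$ with $Y,Z\in \M(n,R)^\times$. Then $\det(X)=\det(Y)\det(Z)$ is a prime factorization in $R$, so one of $\det(Y),\det(Z)$ must be a unit in $\mathcal{E}$. By definition of $\GL(n,R)$, this means $Y\in \GL(n,R)$ or $Z\in \GL(n,R)$, so $X$ is irreducible. This step is essentially a one-liner.

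For the harder direction ($\Rightarrow$), I would proceed by contrapositive: assume $\det(X)$ is not a prime (and recall $X\not\in \GL(n,R)$ means $\det(X)$ is not a unit, and $X\in \M(n,R)^\times$ means $\det(X)\neq 0$), and produce a non-trivial factorization $X=YZ$ with both $Y,Z\notin \GL(n,R)$. Since $R$ is a PID, invoke the Smith normal form to write $UXV=D:=\diag(d_1,\ldots,d_n)$ with $d_1\mid d_2\mid\cdots\mid d_n$ and $U,V\in\GL(n,R)$; any factorization of $D$ transports back to $X$ via $X=U^{-1}D V^{-1}$, so it suffices to factor $D$. Now $\det(D)$ is associate to $d_1\cdots d_n$ and has at least two prime factors (counted with multiplicity) in $R$, since $R$ is a PID and $\det(X)$ is neither zero, a unit, nor a prime.

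The main obstacle is constructing a visibly non-unit factorization of $D$ in every case, which I would handle by splitting into two subcases on the largest invariant factor $d_n$. If $d_n$ itself is reducible in $R$, write $d_n = ab$ with $a,b$ non-units; then
\[
D\ =\ \diag(d_1,\ldots,d_{n-1},a)\cdot \diag(1,\ldots,1,b),
\]
and both factors have non-unit determinant. If instead $d_n$ is a prime (or a unit), then all $d_i$ divide $d_n$, so each $d_i$ is either a unit or an associate of $d_n$; since $\det(D)$ has at least two prime factors, at least two of the $d_i$ (say $d_{n-1}$ and $d_n$) are non-units. Then the split
\[
D\ =\ \diag(d_1,\ldots,d_{n-2},d_{n-1},1)\cdot \diag(1,\ldots,1,1,d_n)
\]
again has both factors with non-unit determinant. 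In either subcase, pulling back by $U^{-1}$ and $V^{-1}$ gives the desired non-trivial factorization of $X$, contradicting irreducibility. No deeper property of $\M(n,R)^\times$ beyond Smith normal form and the unique factorization in the PID $R$ is required.
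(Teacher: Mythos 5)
Your proposal is correct and follows essentially the same route as the paper: the paper's phrase ``any double coset in $\GL(n,R)\backslash \M(n,R)/\GL(n,R)$ can be presented by a diagonal matrix'' is precisely the Smith normal form reduction you invoke, after which both arguments factor the diagonal representative into two non-unit diagonal factors. Your write-up merely makes explicit the case split and the factorizations that the paper leaves to the reader.
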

\begin{proof} 
Suppose $\det(X)$ is prime in $R$. If $X\!=\!YZ$ then 
 $\det(X)\!=\!\det(Y)\det(Z)$ and hence, either $\det(Y)$ or
 $\det(Z)$ belongs to $\mathcal{E}$, and either $Y$ or $Z$ belongs to
 $\GL(n,R)$.  Let us show the converse. Since,  for a principal ideal domain $R$,  any double coset in $\GL(n,R)\backslash \M(n,R)/\GL(n,R)$ can
 be presented by a diagonal matrix. So, consider a diagonal matrix $X$. 
If, either more than two diagonal entries of $X$ are non unit, or a diagonal
 entry of $X$ has more than two prime factors, then $X$ is
 reducible. That is, if $X$ is irreducible, then $\det(X)$ is a prime.
\end{proof}

\noindent
{\bf Definition.} Let $p$ be a prime element of $R$.
An element $X\in \M(n,R)^\times$  is called $p$-irreducible if $\det(X)$ is equal to $p$ up to a unit factor.
\medskip

\noindent
{\bf Remark.} 
Irreducible decompositions (non unique) of elements of $\M(n,R)^\times$
are studied in \S7 Appendix. We use them in the proof 7.\ of main Theorem 3 in \S4.

\bigskip
We denote $X|_l Y$ for $X,Y\!\in\! \M(n,R)^\times$, if there exists $Z\in \M(n,R)^\times$ such that $XZ\!=\!Y$, and we say that $X$ divides $Y$ from the left or $Y$ is a right multiple of $X$.

Define the {\it left-equivalence} $X\!\sim_l\!Y \!\Leftrightarrow_{\mathrm{def}} X|_lY \ \&\ Y|_lX$ ($\Leftrightarrow X\!=\!YE$ for a {\small $E\! \in\!\GL(n,R)$} due to cancellativity of {\small $\M(n,R)^\times$}), 
and denote by $[X]_l$ or by $[X]$ 
the left- equivalence class of an element $X\! \in\! \M(n,R)^\times$. That is, $[X]_l\! =\! X\!\cdot\! \GL(n,R)$, and 
\[ 
\begin{array}{rcl}
 \M(n,R)^\times/\!\!\sim_l & =& \M(n,R)^\times/\GL(n,R) ,\\
\end{array}
\]
where RHS is the quotient set by the right action of $\GL(n,R)$. 
Since the left-equivalence preserves the  left-division relation 
 (i.e.\  $X\sim_lX'$, $Y\sim_lY'$ and $X|_lY$ implies $X'|_lY'$), the quotient set $\M(n,R)^\times/\!\!\sim_l$ 
 naturally carries {\it poset structure} induced from the left-division relation: $[X]_l\le_l[Y]_l
\Leftrightarrow_{\rm def} X|_lY$.   Using the poset
structures, irreducible elements are characterized as follows. 

\medskip
\noindent
{\bf Fact.} {\it An element $X\in \M(n,R)^\times$ is irreducible if and only if $[X]_l$ is a minimal element in $(\M(n,R)^\times /\!\!\sim_l) \setminus \{[1_n]_l\}$ with respect to the poset structure $\le_l$.}

\medskip
\noindent
{\bf Remark.} 
Similar to the above, we can introduce the right division relation, the right equivalence relation on $\M(n,R)^\times$ and the poset structure on $\M(n,R)\!^\times/\!\!\sim_{r}\!=\!\GL(n,R)\!\setminus\! \M(n,R)\!^\times$.  
But, in the present paper, we study only $\M(n,R)\!^\times/\!\!\sim_{l}$, since one has a poset isomorphism:  $\M(n,R)\!^\times/\!\sim_{r} \ \equiv\  \M(n,R)\!^\times/\!\!\sim_{l}, [X]\mapsto [^tX]$.


\section{\bf \large     Normal form for the classes of  $\M(n,R)^{\times}/\!\sim_{l}$}
 We keep notation in \S2 so that $R$ is a principal ideal domain and $\mathcal{E}$ is the unit group of $R$.
We define normal forms for elements of the posets $\M(n,R)^{\times}/\!\sim_{l}$ for $n\in\Z_{\ge1}$. 
To this end, we fix, once and for all, a subset
$|R|\subset R\setminus\{0\}$ and subsets $R(m)\subset R$  for each $m\in |R|$, for which the
following natural bijections hold:
\[
\qquad  |R|  \simeq  (R\setminus\{0\})/\mathcal{E}  \qquad \text{and}  
\qquad  R(m)  \simeq  R/(m) \quad \text{for } m\in |R|,
\]
where  $(m)$ is the ideal in $R$ generated by $m$. 
Without loss of generality, we assume that 1) $M$ is
multiplicative by choosing the representatives for prime elements first, and
2)  the class of 0 (reps.\ 1) in $R/(m)$ is presented by $0$ (reps.\ 1)
in $R(m)$. 

\smallskip
\noindent
{\bf Eg.} Let $R=\Z$. Then, we choose $|R|=\Z_{>0}$ and $R(m)=[0,|m|-1]\cap\Z$.

\medskip
Depending on the choices of $|R|$ and $R(m)$, we introduce a subset of $\M(n,R)^\times$:
{\small
\[
 M_n:=\left\{
\begin{bmatrix}
m_1    & 0      & 0    & \cdots& 0\\
d_{21} & m_2    & 0    & \cdots& 0\\
d_{31} & d_{32} & m_3  & \cdots& 0\\
  *    &    *   &  *   & \cdots& 0\\
d_{n1} & d_{n2} & d_{n3} & \cdots& m_n 
\end{bmatrix}
\biggm. 
\begin{array}{ll}
m_1,m_2,\cdots,m_n\in |R|, \\ 
\\
d_{i1},d_{i2},\cdots,d_{i (i\!-\!1)} 
\in R(m_i) \\
\qquad\quad \text{ for }i=2,\cdots,n
\end{array}
\right\}
\]
\vspace{-0.2cm}
}

\begin{lemm} Every right $\GL(n,R)$-orbit (= a left equivalent class) in $\M(n,R)^\times$ intersects with
 the set $M_n$ at a single element. That is,  the projection  $\M(n,R)^\times\to \M(n,R)^\times/\GL(n,R)$  
induces a bijection
\[
 M_n \ \simeq \  \M(n,R)^\times/\!\sim_l \ .
\]
\end{lemm}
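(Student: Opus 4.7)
The statement is a form of Hermite normal form theorem for the left equivalence on $\M(n,R)^\times$ (i.e., the right $\GL(n,R)$-action), adapted to the chosen representative sets $|R|$ and $R(m)$. The plan is to split into existence (every orbit meets $M_n$) and uniqueness (no orbit meets $M_n$ twice); both arguments exploit the lower triangular shape of the normal form.

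For existence I will induct on $n$. Given $X\in \M(n,R)^\times$, its first row is nonzero, so by the Bézout identity in the PID $R$ one can find $E_1\in \GL(n,R)$ making the first row of $XE_1$ equal to $(m_1,0,\dots,0)$ with $m_1\in |R|$ (the unique $|R|$-representative of the gcd of the first-row entries). The lower-right $(n{-}1)\times(n{-}1)$ block still has nonzero determinant, so by the inductive hypothesis it can be brought into $M_{n-1}$-form by right multiplication by some $E'\in\GL(n-1,R)$, and $\mathrm{diag}(1,E')$ implements this without disturbing the first row or the first column of $XE_1$. It remains to reduce the first-column subdiagonal entries $d_{i,1}$ to lie in $R(m_i)$: I iterate $i=2,3,\dots,n$ and, at step $i$, add a suitable multiple of column $i$ to column $1$ so that $d_{i,1}$ is forced into $R(m_i)$. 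Because column $i$ has zero entries above row $i$, this operation does not change the diagonal nor any previously-reduced $d_{k,1}$ with $k<i$; it only shifts entries $d_{k,1}$ with $k>i$, which will be corrected at subsequent stages.

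For uniqueness, assume $X,X'\in M_n$ with $X'=XE$ for some $E\in \GL(n,R)$. Since $E$ is invertible on $R^n$, the column-span submodule $L:=X\cdot R^n=X'\cdot R^n\subset R^n$ is an invariant of the orbit. Set $K_i:=\{\xi\in R^n\mid \xi_j=0 \text{ for } j<i\}$ and let $p_i$ denote projection to the $i$-th coordinate. Using the lower triangular form of $X$, a direct computation on columns (forcing $v_1=\cdots=v_{i-1}=0$ recursively in $Xv\in K_i$) shows $p_i(L\cap K_i)=(m_i)$; the same argument with $X'$ yields $p_i(L\cap K_i)=(m'_i)$, and since both $m_i,m'_i\in |R|$ we conclude $m_i=m'_i$. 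With the diagonals agreeing, compare $Xe_j$ and $X'e_j$ column by column: their difference lies in $L\cap K_{j+1}$, so $d_{j+1,j}\equiv d'_{j+1,j}\pmod{m_{j+1}}$, and membership in $R(m_{j+1})$ forces equality; once these agree, the difference lies in $L\cap K_{j+2}$, and iterating gives $d_{i,j}=d'_{i,j}$ for all $i>j$. Hence $X=X'$.

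I expect the main conceptual step to be the uniqueness argument: recognizing the column-span submodule $L$ as the true invariant of the left-equivalence class, and then extracting both the diagonal ideals $(m_i)$ as canonical quotients of the filtration $L\cap K_i$ and inductively the subdiagonal entries as residues modulo $m_i$, all from $L$ alone. The existence portion, by contrast, is largely mechanical once one fixes the correct order of column operations (columns left to right and, within each column, rows top to bottom).
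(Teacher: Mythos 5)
Your proposal is correct. The existence half is essentially the paper's argument: clear the first row to $(m_1,0,\dots,0)$ using that a row over a PID can be completed after dividing by its gcd, apply the inductive hypothesis to the lower-right block via $\mathrm{diag}(1,E')$, and then normalize the first column by adding combinations of the later columns (the paper packages your iterated column operations into a single correction $[*]+X''[*']$ with $X''$ lower triangular, which is solved recursively exactly as you describe). The uniqueness half, however, is genuinely different. The paper sets $U=X^{-1}Y\in\GL(n,R)$, notes $U$ is lower unitriangular because $|R|$ contains one representative per unit class, and then runs an induction on $n$ using the two $(n-1)\times(n-1)$ principal submatrices of the identity $XU=Y$, leaving only the $(n,1)$ entry to be killed by the normalization $x_{n1},y_{n1}\in R(m_n)$. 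You instead identify the column-span module $L=X\cdot R^n$ as the invariant of the orbit, recover each $(m_i)$ as $p_i(L\cap K_i)$ from the filtration by coordinate subspaces, and then read off the subdiagonal entries column by column as successive residues; this is coordinate-free in spirit, avoids the induction on $n$ entirely, and makes transparent \emph{why} the normal form is unique (every entry is pinned down by a canonical invariant of $L$ together with the choices $|R|$ and $R(m)$), at the cost of a slightly longer verification that $p_i(L\cap K_i)=(m_i)$. Both arguments are sound; yours generalizes more readily to the Hermite-normal-form statement for arbitrary full-rank submodules of $R^n$.
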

\begin{proof}  This is shown by an induction on $n\in\Z_{>0}$.

 Case $n\!=\!1$ is shown by  $\M(1,R)^\times/\!\!\!\sim\!=\!(R\!\setminus\!\{0\})/\mathcal{E}\!\simeq\! |R|\!=\!M_1$. 
Let $n\!>\!1$ and assume Lemma for $n\!-\!1$.
We first show that the  projection from $M_n$ is surjective.
Let $X\!\in\! \M(n,R)^\times$ and let ${\bf x}\!=\!(x_1,\cdots,x_n)\in R^n$ be its
 first row vector, which is non-zero by the determinant condition  $\det(X)\!\not=\!0$. Then, there exists  $m_1\!\in\! M$, which generates the ideal $(x_1,\cdots,x_n)$, and $A\in
\GL(n,R)$ such that ${\bf x}A\!=\!(m_1,0,\cdots,0)$.
 Hence, 
 we may choose a representative of the class $[X]_l$ to be of the
 form:
$
 \begin{bmatrix}
m_1\!\! &\! 0\\ 
*\! \!&\! X'
\end{bmatrix}
$
for $X\!'\in\! \M(n\!-\!1,\Z)^\times$. By our induction hypothesis, there exists $A'\!\in\! \GL(n\!-\!1,R)$ such that 
$
 \begin{bmatrix}
m_1 \!&\! 0\\ 
*\!&\! X'
\end{bmatrix}
\begin{bmatrix}
1&\! 0\\ 
0&\! A'
\end{bmatrix}
\!=\!
\begin{bmatrix}
m_1 \!&\! 0\\ 
 *\!&\! X''
\end{bmatrix}
$
where $X''$ is an element of $M_{n-1}$ whose diagonal is
 $(m_j)_{j=2}^{n}\in |R|^{n-1}$. Then 
we  find a column vector $[*']\in R^{n-1}$ such that
 $[*]\!+\!X''[*']=:[d']$ is a vector in {\small $\prod_{i=2}^n R(m_i)$}.
Applying a matrix of the form
\noindent 
$\begin{bmatrix}
1 \!&\!\! 0\\ 
*' \!&\!\! 1_{n\!-\!1}
\end{bmatrix}
$
from the right, we get the normal form
$\begin{bmatrix}
m_1 \!&\!\! 0\\ 
* \!&\!\! X''
\end{bmatrix}
\begin{bmatrix}
1 \!&\!\! 0\\ 
*' \!&\!\! 1_{n\!-\!1}
\end{bmatrix}
\!=\!
\begin{bmatrix}
m_1 \!&\!\! 0\\ 
 d'  \!&\!\! X''
\end{bmatrix}$.

Next we show the injectivity of the correspondence. Let $X,Y\in M_n$ such that
 $X\sim_l Y$. Then $U:=X^{-1}Y$ is a lower triangular matrix in
 $\GL(n,R)$, whose diagonal entries are of $\E$.  Since $m^{-1}m'\!\in\!\E$ for $m,m'\!\in\! M$ implies $m^{-1}m'\!=\!1$, diagonals of $U$ are 1. This proves, in particular, the case for $n\!=\!1$. 

For $n\!>\!1$, restricting the equality $XU\!=\!Y$ to the two $(n\!-\!1)\times(n\!-\!1)$ principal sub-matrices forgetting either the first column and row or the last column and row, respectively, we see that parts of $X$ and $Y$  are left-equivalent. By the
 induction hypothesis, the corresponding $(n-1)\times(n-1)$ principal sub-matrices of
 $U$ are equal to the identity matrix. Thus, $U$ is equal to the identity matrix  $1_n$ of size $n$ up to the
 $(n,1)$-entry $u_{n1}$. Then the equality $XU=Y$ implies
 $x_{n1}+u_{n1}m_n=y_{n1}$. Since we have the normalization
 $x_{n1},y_{n1}\in R(m_n)$, we get $x_{n1}=y_{n1}$ and $u_{n1}=0$.
\end{proof}

\noindent
{\bf Definition 1.}  For a left equivalence class $[X]\in
\M(n,R)^\times/\!\!\sim_l$, we call the element in
$X\GL(n,R)\cap M_n$ the {\it normal form} of $[X]$.
{\it We shall often identify the class $[X]$ with its normal form, when there is no possibility of confusions.}

\noindent
{\bf 2.}  By the {\it diagonal part of} a class $[X]$, denoted by $\diag([X])$,
 we mean the diagonal part of its normal form which is an ordered sequence  $(m_1,\!\cdots\!,m_n)\!\in\! |R|^n$, where
  $m_i$  ($1\le i\le n$) is called the diagonal entry of $[X]$ of $i$th {\it level}.  

\noindent
 {\bf Notation.}
 {\small For\! a row vector}\!  ${\bf x}\!\in\! R^n$\!, {\small we define an element of}
 {\it pure of level} $1\! \le\! i \! \le \! n$ 
by
 \[
 M(i:{\bf x}):=\substack{  \text{\normalsize  the matrix obtained by substituting the}
  \\ \text{\normalsize   $i$th row of the identity matrix $1_n$ by  ${\bf x}$. }}
 \]

\noindent
{\bf Definition 3.}
If  $M(i:{\bf x})\!\in\! M_n$, i.e.\ ${\bf x}\!=\!(d_1,\!\cdots\!,d_{i-1},m,0,\!\cdots\!,0)$ for some $m\!\in\! |R|$ and
 $d_j\!\in\! R(m)$ $(1\! \le \! j\!  <\! i)$,   we call 
it  a {\it normal form  of level $i$ with diagonal $m$}. 

\noindent
{\bf 4.}  If the diagonal $m$ of a normal form $M(i:{\bf x})$ is a prime element, say $p$, in $R$,
 we call  $M(i:{\bf x})$ a {\it $p$-irreducible (normal form)} of level $i$.

\medskip
It is clear that any irreducible element of $\M(n,R)^\times$ is left equivalent to a unique irreducible normal form for a certain level $i$ ($1\le i\le n$).  We call $i$ the {\it level of the irreducible element}. For any prime element $p\in R$, the set of left equivalence classes of all $p$-irreducible elements is naturally bijective to the  set 
\[
\begin{array}{l}
I_{0,p}:= \bigsqcup_{i=1}^n \ \{ M(i:({\bf d},p,0,\cdots,0)) \mid {\bf d}\!=\!(d_1,\cdots,d_{i-1})\in (R(p))^{i-1}\}
\end{array}
\]
of all $p$-irreducible normal forms.  We shall sometimes confuse them.


\section{\bf \large     Left divisibility theory}

We develop  an {\it elementary divisibility theory for} $\M(n,R)^\times$ in a style similar to the divisibility theory for Artin monoids ([BS,\S3], see Footnote 2).
The main result is formulated in Theorem 3.
In order to state the result, let us recall notations: $R$ is a principal ideal domain, $|R|$ is  a subset of $R$ s.t.\  $|R| \! \simeq \! (R  \! \setminus \! \{0\} )/\{\text{units}\}$, and $M_n$ is the set of normal forms in $\M(n,R)^\times$ s.t.\  $M_n \simeq \M(n,R)^\times\! /  \GL(n,R)$ (\S3).  

\begin{theo}  
There exists a unique map   
 \[
 \sigma : \M(n,R)^\times \ \times\ M_n
 \to M_n , \ \  (Y,X) \mapsto \sigma_Y(X)
\]
 such that  for any   $X\in M_n$ and $Y,Z\in  \M(n,R)^\times$, 
one has the equivalence 
\[
 (*) \qquad\qquad \qquad\qquad \qquad
X\  |_l \ YZ    \quad     \Longleftrightarrow  \quad \sigma_Y(X)\  |_l\ Z  .
\qquad\qquad \qquad\qquad\qquad
\]
The map $\sigma$ satisfies further the following {\bf 1)}-{\bf 4)}.

\medskip
\noindent
{\bf 1) $\M(n,R)^\times)$-action on $M_n$.}  The map $\sigma$ defines an opposite left action $\sigma_Y$ of $Y\in \M(n,R)^\times$ on the set $M_n$ with the  fixed point $1_n$. That is,
\[
\sigma_{1_n}=id_{M_n}, \quad     \sigma_{Y_2Y_1}=\sigma_{Y_1}\circ \sigma_{Y_2} \quad \text{and}\quad  \sigma_Y(1_n)=1_n
\]
for any $Y,Y_1,Y_2\!\in\! \M(n,R)^\times$. 

\medskip
\noindent
{\bf 2) $\GL(n,R)$-action on $M_n$.} The map $\sigma$ is compatible with the actions of $\GL(n,R)$ on $\M(n,R)^\times$ from right and on $M_n$ from left. That is,  for any $Y\in \M(n,R)^\times$ and $E\in \GL(n,R)$, we have 
\[
\sigma_{YE} = [E^{-1}]\sigma_Y 
\]
where $[E^{-1}]$ denote the left action on the set $M_n$ induced from
 the left action of $E^{-1}$ on the set of left equivalence classes
 through the identification {\bf Lemma 2}.

\medskip
\noindent
{\bf 3) Reciprocity.}  There exists a map 
$ u:  M_n \times M_n \rightarrow 
\mathrm{U}(n,R)\!:=\!\{$lower triangular matrices in $\M(n,R)$ whose diagonals are $1\}$ such that,  for $X,Y\!\in\! M_n$, we have
\[
X\sigma_X(Y) =Y\sigma_Y(X) \cdot u(X,Y).
\]
(here $u(X,Y)$ is of pure of level equal to the maximum of levels of $X$ and $Y$.
  
\medskip
\noindent
{\bf 4) Monotonicity.}  For any $X\in M_n$ and $Y\in \M(n,R)^\times$,  one has
\[
   \frac{\det(X)}{\det(\sigma_Y(X))} \ \in \ |R|  \quad (\ \simeq (R\setminus \{0\})/\mathcal{E}\ )
   \]
In particular, if $X$ is a $p$-irreducible element, then $\sigma_Y(X)$
 is equal either to 1 or to a $p$-irreducible element.  
 Precisely, $\sigma_Y(X)$ is a $p$-irreducible of the
 same level as 
 $X$ if and only if an irreducible decomposition of $Y$ does not
 contain a $p$-irreducible element of the same level as $X$.
More over, let $X\!=\!M(i\!:\!({\bf d},p,{\bf 0}))$ (resp.\
 $Y\!=\!M(j\!:\!({\bf e},q,{\bf 0}))$) be normal forms of $p$- (reps.\
 $q$-) irreducible elements of level $i$ (reps.\ $j$) with
 $X\!\not=\!Y$. Then, $\sigma_Y(X)$ is $p$-irreducible such that   
\[ 
\text{level of } \sigma_Y(X)  =
\begin{cases}
i   &\text{ if }  i\not=j \ \text{ or } \ p\not=q\\
\max\{k\mid d_k\not\equiv e_k \bmod{p},\ 1\le k<i\} &\text{ if }  i=j \text{ and } p=q.
\end{cases}
\]
 
\end{theo}
\begin{proof}
Let us, first, give an overview of the proof. 

The proof is divided into nine steps {\bf 1} -  {\bf 9.}
In {\bf 1.}, we show the uniqueness of the map $\sigma$ satisfying $(*)$
 (if it exists). Then in {\bf 2.-4.}, using the uniqueness property, we
 show that the properties 1), 2) and 3) of $\sigma$ are deduced from
 $(*)$. In {\bf  5.}, we show a criterion for a lower triangular matrix
 to be divisible from left by a $p$-irreducible element. Then, using the
 criterion {\bf 5.}, we show in {\bf 6.}, the  existence of
 $\sigma_Y(X)$ when $X$ and $Y$ are irreducible normal forms.  Then, in
 {\bf 7.},  applying the composition rule in {\bf 1)} to {\bf 6.}\ repeatedly for an irreducible
 decomposition of $Y$, we show the existence of
 $\sigma_Y(X)$ for general $Y\in \M(n,R)^\times$. Then, using 3) Reciprocity, we exchange the role of $X$ and $Y$. To the
 general $Y$, we again apply the rule in {\bf 1)} for an irreducible decomposition of $X$,
 and we obtain $\sigma_X(Y)$ for all $(X,Y)\in M_n\times M_n$.
 Finally, the property
 {\bf 4)} is shown in {\bf 8.}\ and {\bf 9.}\ by induction on the number
 of irreducible factors of $X$ and $Y$, where the essential case is when
 $X$ and $Y$ are irreducible, discussed in {\bf 6.}

The core of the proof is {\bf 5., 6.}\ where we use matrix expression of the monoid  $\M(n,R)^\times$, whereas the other parts {\bf 1., 2., 3., 4.}\ and {\bf 7.}\ of the proof are general properties for any cancellative monoids, and {\bf 8.} and {\bf 9.} are applications of {\bf 6.}

\medskip
{\bf 1.}  {\it For a given pair $(X,Y)\in M_n\times \M(n,R)$, if there exists  $\sigma_Y(X)\in M_n$  satisfying the condition $(*)$, then it is unique. }
\begin{proof} 
Suppose  there are two elements $\sigma,\sigma' \!\in\! \M(n,R)$ satisfying $(*)$. It implies, in particular,  
$\sigma|_l Z \! \Leftrightarrow \! \sigma' |_lZ$  for any $Z \! \in\!  \M(n,R)$. Then, by choosing $Z$ to be $\sigma$ and $\sigma'$, we get $\sigma' |_l \sigma$ and $\sigma |_l \sigma'$. That is, $\sigma$ and $\sigma'$ are left equivalent, i.e.\  $[\sigma]\! =\! [\sigma']$.  
\end{proof}

{\bf 2.}\!  
{\it Let a map $\sigma_Y\!:\!M_n\!\to\! M_n$ satisfy
 the condition $(*)$ for  $Y\!\in\! \M(n,R)^\times$\!. Then
 for any $E\! \in\! \GL(n,R)$, the map $[E^{-1}]\sigma_{Y}: X\in M_n\mapsto
 [E^{-1}]\sigma_Y(X))\! \in\! M_n$ satisfies the condition $(*)$ for $YE$. That is, $\sigma_{YE}$ exists and is equal to $[E^{-1}]\sigma_Y$.}
\begin{proof}
\!We have:  $X|_l  YEZ\! \Leftrightarrow\! \sigma_Y(X) |_l EZ\!  \Leftrightarrow\! E^{-\!1}\sigma_Y(X) |_l Z$.\  This means that  $[E^{-\!1}]\sigma_Y$ satisfies the condition $(*)$ for $\sigma_{YE}$. Then, the uniqueness {\bf 1.} implies the result.  
\end{proof}

{\bf 3.}  {\it Suppose that there exist the maps $\sigma_{Y_1}$ and $\sigma_{Y_2}$ for  $Y_1$ and $Y_2\in \M(n,R)$  satisfying the condition $(*)$, respectively. Then, the composition $\sigma_{Y_1}\circ \sigma_{Y_2}$ satisfies the condition $(*)$ for $Y_2Y_1$.  That is, $\sigma_{Y_2Y_1}$ exists, and is equal to $\sigma_{Y_1}\circ \sigma_{Y_2}$.}
\begin{proof}   We have:\  $X\ |_l\  Y_2Y_1Z \ \ \Leftrightarrow\ \  \sigma_{Y_2}(X)\ |_l\ Y_1Z\ \  \Leftrightarrow\ \  \sigma_{Y_1}(\sigma_{Y_2}(X))\ |_l\ Z$. 
\end{proof}

\medskip
{\bf 4.} {\it If there exists $\sigma_Y(X)$ satisfying $(*)$ for $X,Y\in M_n$, then there exists $\sigma_X(Y)$ satisfying $(*)$ for $Y,X$, and an element $u(X,Y)\in \mathrm{U}(n,R)$ satisfying the equation}  \[
X\sigma_X(Y) =Y\sigma_Y(X) u(X,Y).
\]
\begin{proof}  That $\sigma_Y(X)$ satisfies the condition $(*)$ for the pair $(X,Y)$ implies, in particular (by choosing $Z=\sigma_Y(X)$), $X|_l (Y\sigma_Y(X))$. So, put $Y\sigma_Y(X)=XW$ for a $W\!\!=$ a lower triangular matrix in $\M(n,R)^\times$.
Let us show that the class $\sigma:=[W]\in M_n$ (recall Lemma 2) satisfies the condition $(*)$ for the pair $(Y,X)$. That is, we need to show the equivalence $Y|_l XZ \Leftrightarrow \sigma|_lZ$ for any $Z\in \M(n,R)^\times$. The implication "$\Rightarrow$" follows, since $Y|_lXZ$ implies an existence of $V\in \M(n,R)^\times$ with $XZ=YV$. Then, $X|_lYV$ and  $(*)$ for $(X,Y)$ implies $\sigma_Y(X) |_l V$. So, put $V=\sigma_Y(X) U$ for $U\in \M(n,R)^\times$. Then, $XZ=Y\sigma_Y(X)U=XWU$. Cancelling $X$ from left, we obtain $Z=WU$. That is, $Z$ is left divisible by the class of $W$, i.e.\ by $\sigma$.  The opposite implication "$\Leftarrow$" follows, since $\sigma|_lZ$ implies an existence of $T\in \M(n,R)^\times$ such that $Z=\sigma T$, and, hence $XZ=X\sigma T=XWET=Y\sigma_Y(X)ET$ for some $E\in \GL(n,R)$. That is, $XZ$ is left-divisible by $Y$.
\end{proof}

{\bf 5.} {\it A lower triangular matrix 
$Z\!=\!\begin{bmatrix} Z' \!\!&\! 0\! &\! 0 \vspace{-0.15cm}\\ 
{\bf z}  \!\!\!&\! v \!&\!0 \vspace{-0.05cm}\\ 
\vspace{-0.1cm}
 *\! &\! * \! &\!  *\! 
 \vspace{-0.1cm}
 \end{bmatrix}$ with $Z'\in \M(i\!-\!1,R)$,  ${\bf z}\!\in\!
 R^{i\!-\!1}$ and $v\!\in\! R$, 
is divisible by a $p$-irreducible element $X\! =\! M(i\!:\!({\bf d},p,{\bf 0}))$ for ${\bf d}\! \in\! R^{i\!-\!1}$\! of level $1\!\le\! i\!\le \! n$ from left, 
if and only if they satisfy }
\[
p\mid v \qquad  \text{and} \qquad   {\bf z}\equiv {\bf d}Z'\ \bmod {p}.
\]
\begin{proof} 
 Since $X^{-1}\!=\!M(i:(-\frac{\bf d}{p}, \frac{1}{p},{\bf 0}))$, we
 observe that $X^{-1}Z$ is equal to $Z$ except for the $i$th row, where the $i$th row is given by $(\frac{1}{p}({\bf z}\!-\!{\bf d}Z'), \frac{v}{p},{\bf 0})$. 
\end{proof}

{\bf 6.} We study the case when $X$ and $Y$ are irreducible in $\M(n,R)^\times$. This part is the essential part of the whole proof of Theorem 3.

\medskip
\noindent
{\bf Assertion.}
 {\it Let $X$ and $Y$ be a $p$-irreducible element of level $i$ and  a
 $q$-irreducible element of level $j$ for primes $p,q\! \in\!\! M$ and
 $1\!\le\! i,j \! \le \! n$. Then, either $X\!=\!Y$ and
 $\sigma_Y(X)\!=\!1_n$, or there exists a $p$-irreducible element
 $\sigma_Y(X)$ satisfying  condition $(*)$ whose level is unchanged from
 that of $X$ except for the case $p\!=\!q$ and $i\!=\j$.} 
\begin{proof} The proof is divided into 4 cases.  

{\bf Case i)}  \quad  $i<j$.

\smallskip
Since $Y$ is of level $j$,  $YZ$ for any $Z\in \M(n,R)$ is a matrix which coincides with $Z$ from 1 to $j\!-\!1$ rows. On the other hand, the divisibility of $YZ$ (resp. Z) by $X$ from the left is determined by the row vectors of $YZ$ (resp. $Z$) from 1 to $i$th. That is, we have the equivalence $X|_l YZ \Leftrightarrow X|_l Z$. That is, we have 

\smallskip
\centerline
{ $\sigma_Y(X)=X$. }
This completes the proof for the case when $i<j$. \qquad $\Box$

\medskip
{\bf Case ii)}   \quad   $i=j$ and $p=q$.

\smallskip

This is the most intricate and subtle case.

\smallskip
If $X=Y$, we have $\sigma_{Y}(X)= 1_n$. 
Suppose $X\not=Y$, and let $X=M(i:({\bf d},p,{\bf 0}))$ and $Y=M(i:({\bf e},p,{\bf 0}))$ 
for ${\bf d}, {\bf e} \in (R(p))^{i-1}$ with ${\bf d}-{\bf e}\not=0$. 
Let  the $i$-principal sub-matrix of $Z$ is of the form  $=\!\begin{bmatrix} Z' \!\!&\! 0\\ {\bf z}  \!\!\!&\! v \end{bmatrix}$ with $Z'\in \M(i\!-\!1,R)$,  ${\bf z}\!\in\! R^{i\!-\!1}$ and $v\!\in\! R$.  Then, the $i$-principal sub-matrix of $YZ$ is of the form  $\!\begin{bmatrix} Z' \!\!&\! 0\\ {\bf e}Z'\!+\!p{\bf z}  \!\!\!&\! pv \end{bmatrix}$.
 Then the criterion in {\bf 5.} says that 
\[
\begin{array}{rcl}
X|_l YZ  \quad&  \Leftrightarrow & \quad   p|pv  \text{\quad and \quad}  {\bf e}Z'\!+\!p{\bf z} \equiv {\bf d}Z' \bmod{p}\\ 
& \Leftrightarrow &\quad ({\bf e}-{\bf d})Z'\equiv 0 \bmod{p}
\end{array}
\]

Let us find one particular  solution of the equations, satisfying  $v=1$ and ${\bf z}=0$.  
By the assumption $X\not=Y$, there is some 
\[
k:=\max\{1\le l < i \mid e_l-d_l\not\equiv 0\bmod {p}\}. 
\]
Then, we consider a $p$-irreducible element $W:=M(k:({\bf f},p,{\bf 0}))$ with ${\bf 0}\in R^{n-k}$, where ${\bf f}\in R(p)^{k-1}$ is  defined as: for $1\le l<k$, we solve the following equation  
\[
e_l-d_l +f_l(e_k-d_k)\equiv 0 \bmod{p}
\]
on $f_l\in R(p)$.  This is solvable since $e_k-d_k$ is prime to $p$ in  $R$. 
The $i$-principal sub matrix of $W$ is of the form $M(k:({\bf f},p,{\bf 0}))$  with ${\bf 0}\in R^{i-1-k}$ and satisfies the equation $({\bf e}-{\bf d})M(k:({\bf f},p,{\bf 0}))\equiv 0 \bmod{p}$. This means that $W$ is a solution of $X|_l YW$. Then, any $Z$ with $W|_l Z$  satisfies $X|_l YW|_l YZ$.

On the other hand, let us consider any lower triangular matrix $Z$ satisfying $X|_l YZ$. 
We want to show $W|_l Z$, where, according to {\bf 5.}, $W|_l Z$ if and only if 
\[
p\mid v''  \qquad  \text{and} \qquad   {\bf z''}\equiv {\bf f}Z''\ \bmod {p},
\]
where  the $k$-principal sub-matrix of $Z$ is of the form  $\begin{bmatrix} Z''\!\!&\! 0\\ {\bf z''} \!\!\!&\! v'' \end{bmatrix}\in \M(k,R)^\times$. 
Since $e_m-d_m\equiv 0\bmod{p}$ for $m$ with $k<m\le i-1$, the condition $X|_lYZ$ on $Z$, i.e.\ $({\bf e}-{\bf d})Z'\equiv 0 \bmod{p}$ on $Z$ can be rewritten as  $({\bf e''}-{\bf d''}) \begin{bmatrix} Z''\!\!&\! 0\\ {\bf z''}  \!\!\!&\! v'' \end{bmatrix}\equiv 0 \bmod{p}$, where  $({\bf e''}-{\bf d''}) $ is the row vector  consisting of the first $k$ entries of $({\bf e}-{\bf d}) $.  
Since, by the definition of ${\bf f}$, we have $({\bf e''}-{\bf d''}) \equiv(e_k-d_k)(-{\bf f}, 1) \bmod{p}$.  Then the condition $X|_lYZ$ on $Z$ can be further rewritten as 
$(e_k-d_k)(-{\bf f},1) \begin{bmatrix} Z''\!\!&\! 0\\ {\bf z''}  \!\!\!&\! v'' \end{bmatrix}\equiv 0 \bmod{p}$.
Since by the choice of $k$, $e_k-d_k$ is prime to $p$ so that we can
 divide the equality by $d_k-e_k$.   Then, this condition exactly
 implies $p\mid v''$ and $ {\bf z''}\equiv {\bf f}Z''\ \bmod {p}$.  That
 is, the condition $X|_lYZ$ implies the condition $W|_lZ$ (in fact, they are equivalent). 
Then, $W$ satisfies the property $(*)$, and we put
\[
\sigma_Y(X):=W=M(k:({\bf  f},p,{\bf 0})).
\]

This completes the proof for the case when $p=q$ and $i=j$.  $\Box$

\medskip
\noindent
{\bf Remark.} We have shown the latter half of {\bf 4)} for the case
 $i=j$ and $p=q$. In particular, the level $k$ of $\sigma_Y(X)$ is {\it strictly smaller} than the level $i$ of $X$ and $Y$.  We shall call this phenomenon the {\it \bf jump of levels} of $p$-irreducible elements.

\medskip
{\bf Case iii)}   \quad   $i=j$ and $p\not=q$.

\smallskip
Let $X=M(i:({\bf d},p,{\bf 0}))$ and $Y=M(i:({\bf e},q,{\bf 0}))$ for ${\bf d}\in R(p)^{i-1}$ and ${\bf e} \in R(q)^{i-1}$. Let  the $i$-principal sub-matrix of $Z$ be of the form  $\begin{bmatrix} Z' \!\!&\! 0\\ {\bf z}  \!\!\!&\! v \end{bmatrix}$ with $Z'\in \M(i\!-\!1,R)$,  ${\bf z}\!\in\! R^{i\!-\!1}$ and $v\!\in\! R$.  Then, the $i$-principal sub-matrix of $YZ$ is of the form  $=\!\begin{bmatrix} Z' \!\!&\! 0\\ {\bf e}Z'\!+\!q{\bf z}  \!\!\!&\! qv \end{bmatrix}$ with $Z'\in \M(i\!-\!1,R)$. 
 The criterion in {\bf 5.} says that 
\[
\begin{array}{rcl}
X|_l YZ  \quad&  \Leftrightarrow & \quad   p|qv  \text{\quad and \quad}  {\bf e}Z'\!+\!q{\bf z} \equiv {\bf d}Z' \bmod{p}\\
& \Leftrightarrow & \quad  p| v \quad \text{and}\quad ({\bf e}-{\bf d})Z' +q{\bf z}\equiv 0 \bmod{p}
\end{array}
\]
Let us give one particular solution $W=M(i:({\bf f},p,{\bf 0}))$, satisfying  $X|_lYW$. Namely, we put  $v=p$ and $Z'=1_{i-1}$, then, since $p$ and $q$ are prime, the equation $q{\bf z}\equiv {\bf d}-{\bf e} \bmod {p}$ on ${\bf z}$ has a unique solution  ${\bf f}\in R(p)^{i-1}$.   Then, obviously, for any $Z\in \M(n,R)^\times$ with $W|_l Z$, we get $X|_l YW|_l YZ$.

On the other hand, let us consider any lower-triangular matrix $Z$ satisfying $X|_l YZ$.
We want to show $W|_l Z$, where, according to {\bf 5.}, $W|_lZ$ if and only if 
\[
p\mid v \qquad  \text{and} \qquad   {\bf z}\equiv {\bf f}Z'\ \bmod {p},
\]
where the first condition $p|v$ is already satisfied.  Furthermore, substituting the relation ${\bf e}-{\bf d}\equiv-q{\bf f}$ in the condition $X|_lYZ$, we obtain $-q{\bf f}Z'+q{\bf z} \equiv0 \bmod{p}$.  Since $q$ is prime to $p$, we can divide this equality by $q$, and we obtain the condition for $W|_l Z$. 
That is, the condition $X|_lYZ$ implies the condition $W|_lZ$ (in fact, they are equivalent). 
Thus, $W$ satisfies the property $(*)$, and we put
\[
\sigma_Y(X):=W=M(i:({\bf  f},p,{\bf 0})).
\]

This complete the proof for the case when $p\not=q$ and $i=j$.  $\Box$

\medskip
{\bf Case iv)}  \quad   $i>j$.

\smallskip

Let $X=M(i:({\bf d},p,{\bf 0}))$ and $Y=M(j:({\bf e},q,{\bf 0}))$ 
for ${\bf d}\in R(p)^{i-1}$ and ${\bf e} \in R(q)^{j-1}$, where $p$ may or may not be equal to $q$.  Let $Z\in \M(n,R)^\times$ be any lower triangular matrix, whose  
 $i$-principal sub-matrix  is of the form  $\begin{bmatrix} Z' \!\!&\! 0\\ {\bf z}  \!\!\!&\! v \end{bmatrix}
 \in \M(i,R)^\times$ with $Z'\in \M(i\!-\!1,R)$,  ${\bf z}\!\in\! R^{i\!-\!1}$ and $v\!\in\! R$.  Then, the $i$-principal sub-matrix of $YZ$ is of the form  
 $\left(1_i+\begin{bmatrix} {\bf 0}
 \vspace{-0.1cm}
 \\ ({\bf e},q\! -\!1,{\bf 0})
 \vspace{-0.1cm}
\\ {\bf 0}\end{bmatrix}\right)
 \begin{bmatrix} Z' \!\!&\! 0\\ {\bf z}  \!\!\!&\! v \end{bmatrix} 
= \begin{bmatrix} Z' \!\!&\! 0\\ {\bf z}  \!\!\!&\! v \end{bmatrix} 
+\begin{bmatrix} {\bf 0}
\vspace{-0.1cm}
\\ ({\bf e},q\! -\! 1,{\bf 0})Z'
\vspace{-0.1cm}
\\ {\bf 0}\end{bmatrix}$, where 

1)  $({\bf e},q-1,{\bf 0})$ is a row vector  located  in the $j$th row.  Since $i>j$, the size $j$ of the vector $({\bf e},q-1)$ is strictly smaller than the size $i$ of  the matrix. 

2) ${\bf 0}$'s are zero matrices or zero vectors whose size depends on the place where they are located.  In particular, due to the inequality $i>j$, the ${\bf 0}$'s in the bottom row are non-empty. This implies that the $i$th row vector of $YZ$ is equal to that of $Z$ and is $({\bf z},v,{\bf 0})$.

 Then the criterion in {\bf 5.} says that 
\[
\begin{array}{rcl}
X|_l YZ  \quad&  \Leftrightarrow & \quad   p|v  \text{\quad and \quad}  {\bf z} \equiv ({\bf d}+ d_j({\bf e},q-1,{\bf 0}))Z' \bmod{p}\\ 
\end{array}
\]
Reversing the criterion {\bf 5.}, the last condition is equivalent to that $Z$ is divisible by 
$W:=M(i:({\bf d}+d_j({\bf e},q-1, {\bf 0}),p,{\bf 0}))$.  Clearly, $W$
 is a $p$-irreducible element (even if it is not yet a normal form because of the term $d_j({\bf e},q-1,{\bf 0})$), 

Thus, we put
\[
\sigma_Y(X):=\text{ the normal form of } W. 
\]

This completes the  proof of the case $i>j$, and, hence, that of 6. 
\end{proof}

{\bf 7.}
In \S7. Appendix Lemma 10, we show that for any element $Y\in \M(n,R)^\times$  with $\diag([Y])=(m_1,\cdots,m_n)$ and irreducible decompositions $m_i=\prod_{k=1}^{k_i} p_{i,k}$ ($i=1,\cdots,n$), there exists a unique decomposition $Y=(\prod_{i=1}^{n}\prod_{k=1}^{k_i}P_{i,k}) E$
where $P_{i,k}$ is a $p_{i,k}$-irreducible normal form of level $i$ and
 $E\in \GL(n,R)$. Then, for any $p$-irreducible normal form $X$,
 applying the composition rule in 1) and 2) of Theorem 3, we see that $\sigma_Y(X)$ is given by
\[
\begin{array}{c}
\sigma_Y(X):=\Big([E^{-1}] \prod_{i=n}^{1}\prod_{k=k_i}^{1}\sigma_{P_{i,k}} \Big) (X)
\end{array}
\]
where RHS means 1) act $\sigma_{P_{i,k}}$ on $X$ successively in the
 lexicographic order, 2) left act of $[E^{-1}]$ (use 1, 2, 3 and 6).  The result  is either a
 p-irreducible element or $1_n$. 

\smallskip
So far, we constructed $\sigma_Y(X)$ for a $p$-irreducible
 $X$. We want to construct it for arbitrary  $X\! \in\! M_n$ and  $Y\! \in\! \M(n,R)^\times$.
Due to {\bf 2)} or {\bf 2.}, it is sufficient to show the existence  for
 the case when $Y\in M_n$.  Then, due to {\bf 4.}Reciprocity, this is equivalent to
 show the existence of $\sigma_X(Y)$. For general $X\in \M(n,R)^\times$, taking an irreducible  decomposition 
 $X=(\prod_{i=1}^{n}\prod_{k'=1}^{k_i'}P_{i,k'}' ) E'$ in \S7 Appendix,
 and applying again the composition rule in 1) and 2), we get  
 \[
 \begin{array}{c}
\sigma_X(Y):=\Big( [(E')^{-1}] \prod_{i=n}^{1}\prod_{k'=k_i'}^{1}\sigma_{P_{i,k'}'}\Big) (Y)
\end{array}
\]
where RHS is similarly defined as before.

This completes a proof of existence of the map $\sigma$ for all $X$ and $Y$.

\medskip
{\bf 8.} Before we show {\bf 4)}, let us show its weaker (numerical) version:
\[
 \#(X)\  \ge \ \#(\sigma_Y(X)).
\]
where we mean by $\#(A)$ for $A\in \M(n,R)^\times$ the number of irreducible factors in the irreducible decomposition of $A\ (=\#$ of prime factors in $\det(A)$). If $X$ is irreducible (i.e.\ $\#(X)=1$), then $\sigma_Y(X)$ is either irreducible or $1_n$ so that the inequality holds. Then, using {\bf 4.}, 
 $\#(Y)-\#(\sigma_X(Y))=\#(X)-\#(\sigma_Y(X))\ge0$ 
 for an irreducible $X$. Then for an irreducible decomposition $X\!=\!X_1\cdots X_N$, we obtain 
 $\#(Y)\! \ge \! \#(\sigma_{X_1}(Y)) \! \ge \! \#(\sigma_{X_2}(\sigma_{X_1}(Y)))\! = \! \#(\sigma_{X_1X_2}(Y))\! \ge \! \cdots \! \ge \! \#(\sigma_{X_1\cdots X_N}(Y))\! =\! \#(\sigma_X(Y))$.  Again using {\bf 4.}, we obtain
  $\#(X)-\#(\sigma_Y(X))\! =\!  \#(Y)-\#(\sigma_X(Y)\ge0$.

\medskip
{\bf 9.} Let us show  {\bf 4)} by the double induction on
 $(u,v)\!=\!(\#(X),\#(Y))\in \Z_{\ge0}\! \times \! \Z_{\ge0}$.  The
 cases for $(u,0)$ or $(0,v)$ (i.e.\ the cases when $Y\! =\! 1_n$ or
 $X\! =\! 1_n$) are trivially true. The construction in
 {\bf 6.} shows that $\sigma_Y(X)$ for a $p$-irreducible element $X$ and
 a $q$-irreducible element $Y$ is a $p$-irreducible element, except for
 the case $X\! =\! Y$ and $\sigma_Y(X)\! =\! 1$, and its level is
 unchanged except $p\!=\!q$ and levels of $X$ and $Y$ coincides. This implies
 the statement 4) for the case
 $(u,v)\!=\!(1,1)$. Let us show that our construction of
 $\sigma$ using {\bf 1., 2.} and {\bf 3.} preserves the property {\bf
 4)}, respectively.

Let $X\in M_n$ and $Y\in \M(n,R)^\times$ such that  $\frac{\det(X)}{\det(\sigma_Y(X))} \ \in \ M $. 

{\bf 1.}  For any $Y'\in \M(n,R)^\times$, one has
\vspace{-0.1cm}
\[
\begin{array}{c}
\frac{\det(X)}{\det(\sigma_{YY'}(X))} =\frac{\det(\sigma_Y(X))}{\det(\sigma_{Y'}(\sigma_Y(X))} \frac{\det(X)}{\det(\sigma_Y(X))} \ \in \ M 
\end{array}
\vspace{-0.1cm}
\]

{\bf 2.} For any $E\in \GL(n,R)$, one has 
\vspace{-0.1cm}
\[
\begin{array}{c}
\frac{\det(X)}{\det(\sigma_{YE}(X))} =\frac{\det(X)}{\det(E^{-1}(\sigma_Y(X))} =\frac{\det(X)}{\det(\sigma_Y(X))} \ \in \ M 
\end{array}
\vspace{-0.1cm}
\]

{\bf 3.} If $X,Y\in M_n$
\vspace{-0.1cm}
\[
\begin{array}{c}
\frac{\det(Y)}{\det(\sigma_X(Y))} =\frac{1}{\det(u(X,Y))}\frac{\det(X)}{\det(\sigma_Y(X))}= \frac{\det(X)}{\det(\sigma_Y(X))} \ \in \ M 
\end{array}
\vspace{-0.1cm}
\]
This complete a proof of {\bf 4)}  and, hence, that of Theorem 3.   
\end{proof}

\noindent
{\bf Corollary.} {\it  For $X\!\in\! M_n$ and $Y\!\in\! \M(n,R)^\times$, \
$X\mid_lY \ \Leftrightarrow \ \sigma_Y(X)=1$.}

 \newpage
\noindent
{\bf Explicit formula of $u(X,Y)$ for irreducible $X$ and $Y$.}  

Let $X$ and $Y$ be irreducible normal forms $M(i\! :\! ({\bf d},p,{\bf 0}))$ and $M(j \!: \! ({\bf e},q,{\bf 0}))$, respectively.
Using {\bf 6.} of Proof of Theorem 3, we obtain:

\smallskip
{\bf Case i)}  \ If $i<j$, then 
$\begin{array}{lll}
u(X,Y)=M(j: (-[\frac{{\bf e}\!+\!e_i({\bf d}, p\!-\!1,{\bf 0})}{q}],1,{\bf 0})    \in \mathrm{U}(n,R)
\end{array},
$
where we denote by $[\frac{a}{q}]$ for $a\in R$ the unique element $r\in R$ such that $a-rq\in R(q)$.

\medskip
{\bf Case ii)}  \  If $i=j$ and $p\not=q$, then 
$\begin{array}{lll}
u(X,Y)\!&\!\!=\!\!&\! M(i:({\bf h},1,{\bf 0})) \in \mathrm{U}(n,R).
\end{array}
$
where ${\bf h}\in R^{i-1}$ is the unique solution of the equation: 
 ${\bf d}-{\bf e}=q{\bf f}-p{\bf g} +pq{\bf h}$ \quad for some unknown ${\bf f}\in R(p)^{i-1}$, ${\bf g}\in R(q)^{i-1}$ 
(this equation has a unique solution, since $p$ and $q$ are distinct primes).
 
\medskip
{\bf Case iii)}  \ Let $i=j$, $p=q$. If $X=Y$, then $\sigma(X,Y)=1_n$.  If $X\not=Y$, then  
$
\begin{array}{lll}
u(X,Y)\!&\!\!=\!\!&\! M(i\!:\!\!(g_1,\cdots,g_{k-1},d_k-e_k,\!\frac{d_{k+1}-e_{k+1}}{p},\cdots,\frac{d_{i-1}-e_{i-1}}{p},1,{\bf 0})) \in \mathrm{U}(n,R),
\end{array}
$
where $k\!:=\!\max\{1\! \le\! m\! <\! i \mid d_m\!-\!e_m\not\equiv 0 \bmod{p}\}$ and ${\bf g}\! \in\! R^{k-1}$ is the unique solution of the equation: 
$d_l\!-\!e_l\!+\!f_l(d_k\! -\! e_k)\! =\! g_l p\ \ (1\!\le\! l\! \le\! k\!-\!1)$ \quad for some unknown ${\bf f}\in R(p)^{k-\!1}$ (this equation has a unique solution 
since $d_k\!-\! e_k$ is prime to $p$).

\medskip
{\bf Case iv)} If  $i>j$, we reduce this case to  i) by $u(X,Y)=u(Y,X)^{-1}$.

  \bigskip
\noindent
 {\bf Note.}  As stated in Proof of Theorem 3., the steps {\bf 1, 2,
 3, 4} and {\bf 7.} are general properties valid for any
 cancellative monoids. Therefore, we formulate below the result. As we shall see in the proof of
 \S5 Theorem 4, Corollary below holds also.
 
 \medskip
 \noindent
 {\bf Theorem.}  {\it  Let $\mathcal{M}$ be a cancellative monoid. Let
 ${\bf M}$ be a subset of $\mathcal{M}$ which represents all left
 equivalence classes in $\mathcal{M}$ uniquely.  Set  $I_0:={\bf M}\cap\{\text{irreducible elements}\}$. 

 Suppose that there exists a map 
$ 
 \sigma  : I_0\times I_0  \rightarrow I_0, \   (Y,X) \mapsto \sigma_Y(X)
$ 
 such that for any $Z\in \mathcal{M}$ one has the equivalence: 
 \[
 \!\! (*) \qquad\qquad \qquad\qquad \qquad
X\  |_l \ YZ    \     \Longleftrightarrow  \  \sigma_Y(X)\  |_l\ Z  .
 \qquad\qquad \qquad\qquad\qquad
\]
Then the map $\sigma$ uniquely extends to a map $\sigma : \mathcal{M} \times {\bf M}  \to {\bf M}$  so that $(*)$ holds for any 
$X\in {\bf M}$ and $Y,Z\in  \mathcal{M}$.
The map $\sigma$ satisfies further the following {\bf 1)} - {\bf 3)}.

\medskip
\noindent
{\bf 1)}  The map $\sigma$ defines an opposite left action $\sigma_Y$ of $Y\in \mathcal{M}$ on the set ${\bf M}$ with the  fixed point $1_n$. That is,
\[
\sigma_{1_n}=id_{M_n}, \quad     \sigma_{Y_2Y_1}=\sigma_{Y_1}\circ \sigma_{Y_2} \quad \text{and}\quad  \sigma_Y(1_n)=1_n
\]
for any $Y,Y_1,Y_2\!\in\! \mathcal{M}$. 

\medskip
\noindent
{\bf 2)} The map $\sigma$ is compatible with the unit group action on $\mathcal{M}$ from right and that on ${\bf M}$ from left. That is,  for any $Y\in \mathcal{M}$ and an invertible $E\in \mathcal{M}$, we have 
\[
\sigma_{YE} = [E^{-1}]\sigma_Y 
\]
where $[E^{-1}]$ denotes the left action on the set ${\bf M}$ induced from the left action of $E^{-1}$ on the set of all left equivalence classes.

\medskip
\noindent
{\bf 3)}  There exists a map 
$ u:  {\bf M} \times {\bf M} \rightarrow 
\mathrm{U}(\mathcal{M})\!:=\!$ the unit group of  $\mathcal{M}$
preserving {\bf M} such that,  for $X,Y\!\in\! {\bf M}$, we have}
\[
X\sigma_X(Y) =Y\sigma_Y(X) \cdot u(X,Y).
\]
{\bf 4)} If there exists a degree map on $M$ (c.f.\ \S6) such that
$\deg(X)\ge\deg(\sigma_Y(X))$ for $X,Y\in I_0$, then the inequality holds for all
$X,Y\in {\bf M}$.

\smallskip
\noindent
{\bf Corollary.}\ {\it Under the setting of  Theorem, any finite  subset $J$ of $\mathcal{M}$ admits a unique least common multiple $\LCM(J) \in {\bf M}$ (up to the right unit factor).}


\section{\bf \large     Least common multiples}

As a consequence of the divisibility theory in the previous section, we describe the least common multiple for a given finite set  in $\M(n,R)^\times$ and its basic nature.

\medskip

\noindent
{\bf Definition.}  An element $Z\! \in\!  \M(n,R)^\times$ is called a {\it least common multiple} of  a set $J\!\subset\!  \M(n,R)^\times$, if 1) $X\!\mid_l \! Z$ $\forall X\!\in\! J$ and 2)  if $X\! \mid_l \! Z'$ $\forall X\! \in \!J$ for some $Z' \! \in\! \M(n,R)^\times$ then $Z\! \mid_l \!Z'$. 
By the definition, least common multiples of $J$ form either an empty set or a single left equivalence class.  In the latter case, we shall denote by $\LCM(J)$ the normal form of the class and call it {\it the left least common multiple}  of $J$.\!\!

\begin{theo}
Any finite $J\! \subset\!  \M(n,R)^\times$
has  the least common multiple $\LCM(J)$. 
\end{theo}

\begin{proof}  
We apply recursively Theorem 3 on the cardinality of $J$,  where the case $\#J=1$ is trivially true.
Let $\#J>1$ and put $J=J'\sqcup \{X\}$. By our induction hypothesis, there exists $\LCM(J')$.
Then,  $\LCM(J') \cdot \sigma_{\LCM(J')}(X)$ is a least common multiple of the set $J$, since 1)  it is divisible by any $X'\in J',$ and divisible by $X$ ($\Leftrightarrow \sigma_{Z \cdot \sigma_{Z}(X)}(X)=\sigma_{\sigma_Z(X)}(\sigma_Z(X))=1_n$), and 2) if an element $Z\in \M(n,R)^\times$ is divisible by the elements of $J'\sqcup \{X\}$ then $Z$ should be divisible by $\LCM(J')$ and by $X$, implying that $Z$ is divisible by $\LCM(J')\sigma_{\LCM(J')}(X)$.
\end{proof}

Combining the description $\LCM(X,Y)=[Y\sigma_Y(X)]$ with 4) the monotonicity of Theorem 3,
we obtain the following ``upper and lower bound'' of $\LCM(X,Y)$.

\begin{cor}
For any $X,Y\in \M(n,R)^\times$, we have
\[
\lcm(\det(X),\det(Y)) \ \mid \ \det(\LCM(X,Y)) \ \mid \  \det(X) \det(Y).
\]
\end{cor}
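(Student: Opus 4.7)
The plan is to deduce both divisibilities directly from the explicit description of the least common multiple obtained in the proof of Theorem 4, namely $\LCM(X,Y) \sim_l Y\sigma_Y(X)$, together with the monotonicity statement 4) of Theorem 3. The whole argument should be essentially formal manipulation of determinants, since the hard combinatorial work has already been carried out in Theorems 3 and 4.

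For the right-hand divisibility $\det(\LCM(X,Y)) \mid \det(X)\det(Y)$, I would start from $\LCM(X,Y) = Y\sigma_Y(X)\cdot E$ for some $E\in\GL(n,R)$, which is precisely the content of the construction used in Theorem 4. Taking determinants, $\det(\LCM(X,Y))$ is associate (in $R$) to $\det(Y)\det(\sigma_Y(X))$. By property 4) of Theorem 3, the ratio $\det(X)/\det(\sigma_Y(X))$ lies in $|R|$, so in particular $\det(\sigma_Y(X))$ divides $\det(X)$ in $R$. Multiplying through by $\det(Y)$ yields $\det(Y)\det(\sigma_Y(X)) \mid \det(Y)\det(X)$, which is the desired divisibility.

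For the left-hand divisibility $\lcm(\det(X),\det(Y)) \mid \det(\LCM(X,Y))$, I would use only the definition of $\LCM$: since $X \mid_l \LCM(X,Y)$ and $Y \mid_l \LCM(X,Y)$, writing $\LCM(X,Y) = XU = YV$ for suitable $U,V \in \M(n,R)^\times$ and taking determinants gives $\det(X) \mid \det(\LCM(X,Y))$ and $\det(Y) \mid \det(\LCM(X,Y))$ in $R$. Because $R$ is a principal ideal domain, the lcm in $R$ exists and satisfies the usual universal property, so $\lcm(\det(X),\det(Y))$ divides $\det(\LCM(X,Y))$.

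There is really no main obstacle: the only mild subtlety is that $\LCM(X,Y)$ and $Y\sigma_Y(X)$ agree only up to a right $\GL(n,R)$-factor, but this is absorbed harmlessly because the determinant of any element of $\GL(n,R)$ is a unit in $R$ and hence invisible to divisibility statements. Thus the corollary will follow in a few lines by combining the construction of $\LCM$ from Theorem 4 with the monotonicity clause of Theorem 3.
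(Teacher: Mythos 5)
Your proposal is correct and follows essentially the same route as the paper: the left divisibility from $X\mid_l\LCM(X,Y)$ and $Y\mid_l\LCM(X,Y)$ by taking determinants, and the right divisibility from $\LCM(X,Y)\sim_l Y\sigma_Y(X)$ together with the monotonicity clause 4) of Theorem 3, which puts $\det(X)/\det(\sigma_Y(X))$ in $|R|\subset R$. The remark about the $\GL(n,R)$-ambiguity being invisible to divisibility is exactly the right justification.
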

\begin{proof} The first division relation follows from:  $X|_l
  Y\sigma_Y(X)$ and $Y|_l X\sigma_X(Y)$, and the second division
  relation follows from
$\det(X)\det(Y)/\det(\LCM(X,Y))=\det(X)\det(Y)/\det(X\sigma_X(Y))=\det(Y)/\det(\sigma_X(Y))\in |R|\subset R$.
\end{proof}

\medskip
In the following Part I and II of the present section, we describe two
basic properties of $\LCM$, which will be used in \S6 to analyze  the skew growth function.  As a digression application of them, at the end of this section, we discuss about a generalization of the {\it fundamental element}, which was introduced for braid monoids by Garside and then for Artin monoids [B-S] and for other cases [S-I].
 
\medskip
  \noindent
  {\bf Part I.} We first study the behavior of $\LCM$ of elements whose diagonals are co-prime to each other at each level $i\!=\!1,\cdots,n$.  Then we get "multiplicativity" of the diagonals at each level. The result is used to show the weak Euler product decomposition of the skew growth function.

\begin{lemm}
Let $X,Y\! \in\! \M(n,R)^\times$ be with  $\diag([X])\! =\!
 (l_1,\cdots,l_n)$ and $\diag([Y])\!=\! (m_1,\! \cdots\! ,m_n)$. If
 $l_i$ and $m_i$ are relatively prime in $R$ for $1\!\le\! i\!\le\!n$
 (we shall say that $\diag([X])$ and $\diag([Y])$ are
 componentwisely co-prime), then 
$\diag(\LCM(X,Y))\!=\!(l_1m_1,\!\cdots\!,l_nm_n)$. In particular, we have:  $\det(X)\det(Y)\!=\!\det(\LCM(X,Y))$.
\end{lemm}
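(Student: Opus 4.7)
Since $\LCM(X,Y)$ depends only on the left-equivalence classes, I may assume $X,Y\in M_n$. The whole claim then reduces to a single identity:
\[
\diag(\sigma_Y(X)) \;=\; \diag(X) \qquad\text{under the coprimality hypothesis.}
\]
Granting this, $\LCM(X,Y)\sim_l Y\sigma_Y(X)$ by Theorem 4, and both factors are lower triangular with $|R|$-valued diagonals. Hence their product is lower triangular with diagonal equal to the componentwise product $(l_i m_i)_i$, which already lies in $|R|^n$ by the multiplicativity of $|R|$, and is therefore the diagonal of the normal form. This gives $\diag(\LCM(X,Y))=(l_i m_i)_i$ and, in particular, $\det(\LCM(X,Y))=\det(X)\det(Y)$.

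I would prove $\diag(\sigma_Y(X))=\diag(X)$ by induction on the number $\#Y$ of irreducible factors of $Y$. The base case $\#Y=0$ is immediate: then $Y\in\GL(n,R)$, and $\sigma_Y(X)$ equals the normal form of $Y^{-1}X$, which has the same row lattice (and hence the same normal form) as $X$. For the inductive step, the Appendix factorization (\S7 Lemma 10) lets me write $Y=Q\,Y'$ with $Q\in M_n$ a $q$-irreducible of some level $j$ and $\#Y'=\#Y-1$ (absorbing a unit into $Y'$ if needed so that $Q$ is in normal form). The composition rule 1) of Theorem 3 gives $\sigma_Y(X)=\sigma_{Y'}(\sigma_Q(X))$, and $\diag(Y')$ differs from $\diag(Y)$ only at position $j$ (becoming $m_j/q\in|R|$), so componentwise coprimality with $\diag(X)$ is preserved. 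Thus the induction reduces to the single-factor statement $\diag(\sigma_Q(X))=\diag(X)$.

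For this single-factor step I would invoke reciprocity 3) of Theorem 3, namely $X\,\sigma_X(Q)=Q\,\sigma_Q(X)\cdot u(X,Q)$. Taking diagonals componentwise (and using $\diag(u(X,Q))=(1,\dots,1)$) gives
\[
\diag(X)\cdot\diag(\sigma_X(Q))\;=\;\diag(Q)\cdot\diag(\sigma_Q(X)).
\]
By the monotonicity clause 4) of Theorem 3, $\sigma_X(Q)$ is a $q$-irreducible of the same level $j$ as $Q$ — and hence has $\diag(\sigma_X(Q))=\diag(Q)$ — unless the irreducible decomposition of $X$ contains a $q$-irreducible of level $j$. But the presence of such a factor would force $q\mid l_j$, which contradicts $\gcd(l_j,m_j)=1$ together with $q\mid m_j$. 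Therefore $\diag(\sigma_X(Q))=\diag(Q)$, and cancelling its nonzero components from the reciprocity identity yields $\diag(\sigma_Q(X))=\diag(X)$, as required.

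The main difficulty lies precisely in this single-factor step: the coprimality hypothesis is exactly what excludes the ``jump of levels'' case ($i=j$, $p=q$) from the proof of Theorem 3, so that the level of an irreducible factor cannot drop under $\sigma_Q$. Beyond that, the proof is a clean combination of the reduction to normal forms, the composition rule, reciprocity, and monotonicity.
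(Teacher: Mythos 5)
Your overall strategy is essentially the paper's: peel off one irreducible factor of $Y$ at a time via the Appendix decomposition and the composition rule, and observe that componentwise coprimality excludes the level-jump case, so that each $\sigma_Q$ preserves the diagonal of $X$. Your single-factor step (reciprocity plus the monotonicity clause 4) of Theorem 3) is a legitimate repackaging of what the paper does by re-reducing to the irreducible--irreducible case of step {\bf 6.}, and the coprimality argument ($q\mid m_j$ and $\gcd(l_j,m_j)=1$ exclude a $q$-irreducible of level $j$ from the decomposition of $X$) is exactly the right point.

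There is, however, a genuine error in your base case. You claim that for $Y=E\in\GL(n,R)$ the element $\sigma_E(X)$, i.e.\ the normal form of $E^{-1}X$, ``has the same row lattice (and hence the same normal form) as $X$''. Left-equivalence classes $[A]_l=A\,\GL(n,R)$ are determined by the \emph{column} lattice $A\cdot R^n$, not the row lattice, and the column lattice of $E^{-1}X$ is $E^{-1}(X\cdot R^n)$, which differs from that of $X$ in general. Concretely, for $E^{-1}=\left[\begin{smallmatrix}0&1\\1&0\end{smallmatrix}\right]$ and $X=\left[\begin{smallmatrix}2&0\\0&3\end{smallmatrix}\right]$ one finds $\diag(\sigma_E(X))=(3,2)\neq(2,3)=\diag(X)$, even though $\diag([E])=(1,1)$ is coprime to everything. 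Hence the identity $\diag(\sigma_Y(X))=\diag(X)$ on which your induction rests is \emph{false} for arbitrary $Y$ with coprime diagonal, and your inductive step --- which absorbs a unit into $Y'$ --- steps outside the class of $Y$ for which it can hold. The repair is to note that the unit $E$ produced by Lemma 10 applied to a normal form $Y\in M_n$ is lower \emph{unitriangular} (since $Y$ and $\prod P_{i,k}$ are both lower triangular with the same diagonal), and for such $E$ the action $[E^{-1}]$ does preserve diagonals; alternatively, avoid $\sigma_E$ entirely by computing $\diag(\LCM(X,Y))$ from the lower-triangular representative $P_1\cdots P_N\,\sigma_{P_N}(\cdots\sigma_{P_1}(X))$ of the class $[\,Y\sigma_Y(X)\,]$, which is what the ``$\simeq$'' in the paper's proof of Lemma 6 is silently doing.
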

\begin{proof}
Let $m_i=\prod_{k=1}^{k_i}p_{i,k}$ be the prime decomposition of the diagonal entities of $Y$, and let $Y=(\prod_{i=1}^n\prod_{k=1}^{k_i} P_{i,k})E$, where $P_{i,k}$ is a $p_{i,k}$-irreducible normal form of level $i$, $E\in \GL(n,R)$ and the order of the product is lexicographic, be the irreducible decomposition of $Y$ given in \S7 Appendix Lemma 10. 
Then,  we have 
$Y \sigma_Y(X)\simeq \big(\prod_{i=1}^n\prod_{k=1}^{k_i} P_{i,k}\big)\big(\prod_{i=n}^{1}\prod_{k=k_i}^{1}\sigma_{P_{i,k}} (X)\big)$, where,  inside in each big parenthesis of RHS, product is lexicographic or anti-lexico-graphic order. Considering each action of $\sigma_{P_{i,k}}$ inductively, it is sufficient to prove the case when $Y $ is irreducible. Namely, we have only to prove the following special case.

\medskip
\noindent
{\bf Assertion.}  {\it Let $Y$ be a $p$-irreducible element of level $i$. Suppose the $i$th component $l_i$ of $\diag([X])$ is prime to $p$. Then, we have $\diag([X])=\diag(\sigma_Y(X))$.}
\begin{proof} The statement is equivalent to that $\diag([X\sigma_X(Y)])=\diag([Y\sigma_Y(X)])$ is equal to $\diag([X])$ except at the $i$th place, where the values is $pl_i$, and, then, it is equivalent to that $\sigma_X(Y)$ is $p$-irreducible of level $i$ if $Y$ is so and $X$ is lower triangular matrix whose $i$th diagonal element $l_i$ is prime to $p$.  This can be reduced again to the case when $X$ is irreducible, by considering the decomposition 
$X=(\prod_{i=1}^{n}\prod_{k'=1}^{k_i'}P_{i,k}' ) 
E'$ and considering the action of $\sigma_{P'_{i,k}}$ in the expression 
$X\sigma_X(Y)\simeq (\prod_{i=1}^{n}\prod_{k'=1}^{k_i'}P_{i,k}' ) \Big(\prod_{i=n}^{1}\prod_{k'=k_i'}^{1}\sigma_{P_{i,k}'}\Big) (Y)$ inductively. 
However,  in 6. of the proof of Theorem 3, it was shown that if $X$ and $Y$ are irreducibles, then $\sigma_X(Y)$ is an irreducible element such that i) $\det(Y)=\det(\sigma_X(Y))$ and ii) levels of $Y$ and $\sigma_X(Y)$ coincides each other except the case $\det(X)=\det(Y)$ and levels of $X$ and $Y$ coincides (when the jump of level occurs in Case ii) of the proof). 
\end{proof}

\vspace{-0.2cm}
This completes a proof of Lemma 6.
\end{proof}

\noindent
 {\bf Part II.} 
We next study the behavior of  $\LCM$ for a set of  $p$-irreducibles for
 a fixed prime $p\!\in\! R$. We will observe that the diagonals are no-longer multiplicative but  
the levels  may go down or disappear (jumping of the level),  that is,
 the data of levels of the input $J$ alone cannot determine the levels
 of the output $\LCM(J)$. 

\noindent
\begin{lemm}{\it Let $X\!=\!M(i:{\bf x})$ and $Y\!=\!M(i:{\bf y})$ be two distinct $p$-irreducible normal forms of the same level $i$. Set $k\!:=\!\max\{ 1\le l<i \mid x_l-y_l\not\equiv 0 \bmod{p}\}$. Then 
\[
\LCM(X,Y)=M(k,{\bf u})M(i,{\bf v}),
\]
where $M(k:{\bf u})$ and $M(i:{\bf v})$ are mutually commutative $p$-irreducible normal forms of level $k$ and $i$, where the row vectors ${\bf u}=(u_i)$ and ${\bf v}=(v_i)$ are given as follows. 
}
\[\begin{array}{rllll}
u_l  \equiv  (x_l-y_l) /(x_k-y_k)\quad \bmod{p} \text{ for } 1\!\le\! l\! <\!k, &\! u_k=p, &\! u_l=0 \text{ for }  k<l \le n\\
v_l  \equiv  (x_ly_k \! -\!  y_lx_k)/(x_k\! -\! y_k)\bmod{p} \text{ for } 1\le\! l \!<\! k,  &\! v_k=0, &\! v_l\!=\!x_l\!=\!y_l \text{ for  }k\!<\! l \!\le\! n. 
\end{array}
\]
Here we use the bijection $R/(p)\simeq R(p)$ for the reason given in {\bf Lemma 9}. 
\end{lemm}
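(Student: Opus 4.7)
The plan is to use Theorem~4 together with the explicit formula for $\sigma_Y(X)$ derived in Case~ii) of step~{\bf 6.} in the proof of Theorem~3. Since $X\neq Y$ are $p$-irreducible of the same level $i$, that case produces $\sigma_Y(X)=W:=M(k:({\bf f},p,{\bf 0}))$, where $k$ is the index in the statement and $f_l\in R(p)$ is the unique solution of $y_l-x_l+f_l(y_k-x_k)\equiv 0\pmod p$ for $1\le l<k$. By the construction in the proof of Theorem~4, $\LCM(X,Y)$ is the normal form of the product $YW$, so the task reduces to computing $YW$ and reducing it to normal form.

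Since $Y$ and $W$ each differ from $1_n$ only in a single row (row $i$ and row $k$, with $k<i$), a direct matrix multiplication shows that $YW$ has standard-basis rows at positions $l\notin\{k,i\}$, carries $(f_1,\dots,f_{k-1},p,0,\dots,0)$ at row $k$, and at row $i$ has entries $y_l+y_kf_l$ at positions $l<k$, $y_kp$ at position $k$, $y_l$ at positions $k<l<i$, $p$ at position $i$, and $0$ beyond. I then normalize by right-multiplication with elementary matrices in $\GL(n,R)$. The key observation is that column $i$ of $YW$ has $p$ at position $(i,i)$ and zeros at every other row, so adding a suitable multiple of column $i$ to any column $l\le k$ reduces the $(i,l)$-entry by an arbitrary multiple of $p$ \emph{without} disturbing any other entry — in particular, row $k$ is preserved, since column $i$ vanishes there. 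This replaces the $(i,k)$-entry $y_kp$ by $0$ and each $(i,l)$-entry for $l<k$ by its unique representative in $R(p)$. Matching the resulting matrix against the formulas in the statement reduces, after substitution of $f_l\equiv(x_l-y_l)/(x_k-y_k)\bmod p$ into the expression $y_l+y_kf_l$, to elementary mod-$p$ simplification via the bijection $R/(p)\simeq R(p)$.

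Commutativity $M(k,{\bf u})M(i,{\bf v})=M(i,{\bf v})M(k,{\bf u})$ is a one-line check: since $u_i=0$ (as $i>k$) and $v_k=0$, both products equal the lower-triangular matrix obtained from $1_n$ by substituting ${\bf u}^T$ into row $k$ and ${\bf v}^T$ into row $i$. The main obstacle is the bookkeeping in the modular arithmetic of the final identification step and the careful tracking of $R(p)$-representatives through the column reductions. A perhaps cleaner alternative route avoids the normalization entirely: verify directly via criterion~{\bf 5.} of Theorem~3 that $Z:=M(k,{\bf u})M(i,{\bf v})$ is divisible from the left by both $X$ and $Y$ — this reduces to congruences of the form $v_l\equiv x_l+x_ku_l\pmod p$ which are checked by direct substitution of the explicit ${\bf u},{\bf v}$ — and then, since $\det(Z)=p^2=\det(\LCM(X,Y))$ (the latter equality coming from $\det(\sigma_Y(X))=p$ via part~{\bf 4)} of Theorem~3), the divisibility $\LCM(X,Y)\mid_l Z$ forces $Z\sim_l\LCM(X,Y)$, hence equality as normal forms.
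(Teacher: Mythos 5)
Your approach is exactly the paper's: the printed proof of Lemma~7 is the single line ``Recall the proof of {\bf 6.}~ii) of Theorem~3; details are left to the reader,'' and both of your routes --- normalizing $Y\sigma_Y(X)$ by column operations using the isolated $p$ in column $i$, or checking divisibility by $X$ and $Y$ via criterion {\bf 5.} and then forcing equality from $\det = p^2$ --- are legitimate ways of filling in those details. One concrete arithmetic point, sitting precisely in the ``bookkeeping'' you defer: the value $f_l\equiv(x_l-y_l)/(x_k-y_k)$ that you substitute does \emph{not} solve the congruence $y_l-x_l+f_l(y_k-x_k)\equiv 0 \bmod p$ that you correctly quote from Case~ii); the solution is $f_l\equiv(x_l-y_l)/(y_k-x_k)$. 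Carrying the correct value through your row-$i$ computation gives $y_l+y_kf_l\equiv(x_ly_k-y_lx_k)/(y_k-x_k)$, so the normal form of $YW$ comes out as the Lemma's $M(k:{\bf u})M(i:{\bf v})$ but with $x_k-y_k$ replaced by $y_k-x_k$ in both denominators; likewise, in your second route the congruences $v_l\equiv x_l+x_ku_l$ and $v_l\equiv y_l+y_ku_l$ hold with that sign and fail with the printed one. This overall sign, common to ${\bf u}$ and ${\bf v}$, appears to be a slip in the statement itself rather than in your method, and it does not affect the structural content (a product of two commuting $p$-irreducible normal forms of levels $k<i$, i.e.\ the jump of levels), which is all that \S6 uses; but as written your sketch asserts a match that the deferred mod-$p$ simplification would not actually deliver, so you should either correct $f_l$ and record the resulting denominators, or note the discrepancy with the statement explicitly.
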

\begin{proof} Recall the proof  of {\bf 6.} ii) of Theorem 3. Details are left to the reader. 
\end{proof}


For a set $J$ of $p$-irreducibles elements, $\LCM(J)$ can be
calculated by applying Lemma 6 and 7 successively.
Its final form is characterized in the following Lemma. 
 
\begin{lemm}
The following conditions i) - v) on $X\in M_n$ are equivalent.

\noindent
i)  There exists a set $J$ of $p$-irreducibles such that $X=\LCM(J)$.

\noindent
ii)  $X$ divides $p1_n$.

\noindent
iii)  $ X$ satisfies the following 1) and 2).

1) {\it Diagonal entries of $X$ are either equal to 1 or to $p$. }

2) {\it If the $i$th diagonal entry of $X$ is equal to 1, then the $(i,j)$-entry of $X$ for all 

\quad 
$j$ with $1\le j<i$ is equal to 0. If $j$th diagonal entry of $X$ is equal to $p$, then 

\quad 
the $(i,j)$-entry  of $X$ for all $i$ with $j<i\le n$ is equal to 0. }

\noindent
iv)  Let ${\bf x}_{ i}$ be the $i$th row-vectors of $X$ ($1\!\le\! i \!\le \! n$), and set $J(X):=\{M(M:{\bf x}_{i})\}_{i=1}^n$.
Then, 1) $J(X)$ consists of  mutually commutative $p$-irreducibles and possibly $1_n$, 

\quad \ \ 
2) 
$\begin{array}{ll}
X=\LCM(J(X))=\prod_{i\in \{1,\cdots,n\}} M(i,{\bf x}_{i}).
\end{array}
$ 

\noindent
v)  $X$ is a product of $p$-irreducible normal
 forms which are mutually commutative and mutually of different levels.
\end{lemm}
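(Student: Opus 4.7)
The plan is to prove the cycle $\mathrm{i})\Rightarrow\mathrm{ii})\Rightarrow\mathrm{iii})\Rightarrow\mathrm{iv})\Rightarrow\mathrm{v})\Rightarrow\mathrm{i})$, establishing all five equivalences in one pass.

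For $\mathrm{i})\Rightarrow\mathrm{ii})$, I observe that every $p$-irreducible normal form $M(i:({\bf d},p,{\bf 0}))$ left-divides $p\cdot 1_n$ (the quotient is integral by the inverse formula in step~\textbf{5.}\ of the proof of Theorem~3), so $p\cdot 1_n$ is itself a common multiple of any finite set $J$ of $p$-irreducibles and $\LCM(J)\mid_l p\cdot 1_n$ follows by definition of $\LCM$. For $\mathrm{ii})\Rightarrow\mathrm{iii})$, write $XY=YX=p\cdot 1_n$ with $Y=pX^{-1}$; since $X$ is lower triangular with non-zero diagonal, so is $Y$, with diagonal entries $(p/m_1,\ldots,p/m_n)$. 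Requiring $Y\in\M(n,R)$ forces each $m_i\in\{1,p\}$, i.e.\ condition iii).1). Condition iii).2) splits into two halves: the case $m_i=1$ is automatic from $X\in M_n$ because $R(1)=\{0\}$; the case $m_j=p$, asserting $X_{ij}=0$ for all $i>j$, follows by induction on $i$ starting from $i=j+1$. The identity $(YX)_{ij}=0$ combined with the inductive hypothesis $X_{kj}=0$ for $j<k<i$ collapses to $Y_{ij}\cdot p+(p/m_i)X_{ij}=0$, so $m_i\mid X_{ij}$; combined with $X_{ij}\in R(m_i)$ this gives $X_{ij}=0$.

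For $\mathrm{iii})\Rightarrow\mathrm{iv})$, set $S:=\{i:m_i=p\}$. For $i\notin S$ the row $\mathbf{x}_i$ equals $e_i$ and $M(i:\mathbf{x}_i)=1_n$; for $i\in S$, $M(i:\mathbf{x}_i)$ is a $p$-irreducible normal form of level~$i$. For $i<j$ both in $S$, condition iii).2) supplies $(\mathbf{x}_j)_i=0$, which makes a direct matrix computation show that $M(i:\mathbf{x}_i)$ and $M(j:\mathbf{x}_j)$ commute and their product carries $\mathbf{x}_i$ in row $i$, $\mathbf{x}_j$ in row $j$, and identity rows elsewhere. Iterating over all pairs yields $\prod_{i=1}^{n}M(i:\mathbf{x}_i)=X$; since the diagonals of these commuting factors are pairwise componentwise coprime, Lemma~6 gives $X=\LCM(J(X))$. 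Then $\mathrm{iv})\Rightarrow\mathrm{v})$ is immediate, and for $\mathrm{v})\Rightarrow\mathrm{i})$, if $X=\prod P_i$ with $P_i$ commuting $p$-irreducibles at distinct levels, each $P_i$ is a left factor of $X$, so $\LCM(\{P_i\})\mid_l X$; Lemma~6 iterated gives $\det(\LCM(\{P_i\}))=\prod\det(P_i)=\det(X)$, forcing equality in $M_n$.

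The key observation underlying the argument is that the column-vanishing in condition iii).2) (established in $\mathrm{ii})\Rightarrow\mathrm{iii})$ via $YX=p\cdot 1_n$) is precisely what makes the row factors $M(i:\mathbf{x}_i)$ commute in $\mathrm{iii})\Rightarrow\mathrm{iv})$. This coupling between column structure and row factorization is the only substantive point; every other step is either a short direct computation or a routine appeal to Lemma~6.
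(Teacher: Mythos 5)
Your proof is correct and follows essentially the same route as the paper: the same cycle i)$\Rightarrow$ii)$\Rightarrow$iii)$\Rightarrow$iv)$\Rightarrow$v)$\Rightarrow$i), with the integrality of $pX^{-1}$ driving ii)$\Rightarrow$iii), the commutativity criterion for normal forms of distinct levels driving iii)$\Rightarrow$iv), and Lemma~6 supplying the determinant count that forces $\LCM(J(X))=X$. The only cosmetic difference is in ii)$\Rightarrow$iii).2), where you run an explicit induction on the row index via $(YX)_{ij}=0$ while the paper takes a minimal nonzero entry and exhibits a $c/p^2$ entry of $X^{-1}$ as a contradiction; these are the same idea.
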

\begin{proof}
i) $\Rightarrow$ ii).  For a $p$-irreducible element $X$, $\det(X)\!=\!\varepsilon p$ ($\varepsilon\!\in\! \mathcal{E}$)  implies $X\mid_lp1_n$. Then, any  least common multiple of $p$-irreducible elements  should divides $p1_n$.

\smallskip
ii) $\Rightarrow$ iii).   1)  follows since $p\cdot X^{-1}$ is integral. 
The first half of 2) follows from the definition of a normal form and
 $R(1)=\{0\}$. 
Let $j$th diagonal entry of $X$ is equal to $p$. Put
 $\bar{i}:=\mathrm{min}\{j<i\le n\mid (i,j) \text{-entry of $X$ is not equal
 to } 0\}$. Due to 1) and the first half of 2), the $\bar{i}$th diagonal entry of $X$ is
 equal to $p$. Then the $(\bar{i},j)$-entry of $X^{-1}$ is of a form $c/p^2$
 for non-zero $c$, which contradicts to $pX^{-1}\in \M(n,R)$.

\smallskip
iii) $\Rightarrow$ iv). Since the diagonals of $X$ are either 1 or $p$, $J(X)$ consists of identity matrices $1_n$ and some $p$-irreducible normal forms of different levels
The commutativity of the elements of $J(X)$ follows from a general fact that 
{\it two normal forms $M(i:{\bf x})$ and $M(j:{\bf y})$ of levels $i$ and $j$, respectively, for $i<j$ are commutative if and only if $i$th entry of ${\bf y}$ is equal to 0}.  The commutativity implies $\LCM(J(X)) \big|_l \prod_{i=1}^{n} M(i:{\bf x}_{i})$. On the other hand, {\bf Lemma 6} implies that  $\det(\LCM(J(X)))$ is equal to  $p^k=\det(\prod_{i=1}^{n} M(i:{\bf x}_{ i}))=\det(X)$ where $k:=\#$ of $p$'s in the diagonal of $X$. Thus, the  equalities are shown. 

\smallskip
iv) $\Rightarrow$ v).  Clear.

\smallskip
v) $\Rightarrow$ i). If $X=X_1\cdot...\cdot X_m$ where $X_1,\ldots,X_m$
 are mutually commutative, then $\LCM(X_1,\ldots,X_m)\mid
 X$. If, further,  $X_1,\ldots,X_m$ are mutually of different level,
 then applying Lemma 6 inductively on $m$, we get $X=\LCM(X_1,\cdots,X_m)$.
\end{proof}

\noindent
{\it Note.}  The "commutativity" used in iv) and v) are not a property of the classes in $\M(n,R)^\times\!/\!\sim_l$ but a property of the matrices of normal forms  themselves.

\noindent
{\it Example.}  1.  A matrix like 
{\footnotesize $\begin{bmatrix} p\! &\! 0 \\  1\!& \!p\end{bmatrix}
=\begin{bmatrix} p\! &\! 0 \\ 0\!& \! 1\end{bmatrix}
\begin{bmatrix} 1\! &\! 0 \\ 1\!& \! p\end{bmatrix}$}, which violate the condition iii), 
\vspace{-0.1cm}
cannot be a least common multiple of some irreducible elements.

\noindent
2.\ If
{\footnotesize
$A\!\!:=\!\!\! \begin{bmatrix} p\!&\!0 \!&\! 0  \vspace{-0.1cm} \\  
0 \!&\! 1 \!&\! 0 \vspace{-0.1cm}\\ 
0 \!&\! 0 \!&\! 1\end{bmatrix}\!\!,
 B\!\!:=\!\! \! \begin{bmatrix} 1\!&\!0 \!&\! 0 \vspace{-0.1cm}\\  
 0 \!&\! 1 \!&\! 0 \vspace{-0.1cm}\\ 
 i \!&\! k \!&\! p\end{bmatrix}\!\!,
C\!\!:=\!\!\! \begin{bmatrix} 1\!&\!0\!&\! 0 \vspace{-0.1cm}\\ 
0 \!&\! 1 \!&\! 0 \vspace{-0.1cm}\\ 
j \!&\! k \!&\! p\end{bmatrix}$\!\!
} for $i\!\!\not=\!\!j,k\!\in\! R(p)$, then 
{\footnotesize 
$\LCM(B,C)\!=\!
\begin{bmatrix} p\!&\!0 \!&\! 0 \vspace{-0.1cm}\\ 
0\!&\! 1 \!&\! 0  \vspace{-0.1cm}\\ 
0 \!&\! k \!&\! p\end{bmatrix}$}

\noindent
is divisible\! by\! $A$. Then we have: 
{\footnotesize
$\LCM(A,B)\! =\! \LCM(B,C)\! =\! \LCM(C,A)\! =\! \LCM(A,B,C)$.  
}

\medskip
We give a useful criterion to be divisible by a $p$-irreducible element. 

\begin{lemm}
  {\it A $p$-irreducible element $X\!\in\! \M(n,R)^\times$ divides  an element $Y\in \M(n,R)^\times$ from the left, if and only if the mod $p$ reduction of $X$ divides that of $Y$ in $\M(n,R/(p))$ (here 
   "division relation in $\M(n,R/(p))$" is used in the sense given in the proof since 
 $\det(X \bmod{p})\equiv \det(X)\equiv 0 \bmod{p}$).}
\end{lemm}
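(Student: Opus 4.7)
The plan is to reduce both sides of the equivalence to one and the same row-congruence modulo $p$, by a direct computation with $X^{-1}$ over the fraction field of $R$. This is the adaptation of step {\bf 5.}\ in the proof of Theorem 3 from lower-triangular matrices to arbitrary $Y \in \M(n,R)^\times$.

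First I would fix notation: write $X = M(i:({\bf d},p,{\bf 0}))$ with ${\bf d} \in R(p)^{i-1}$, so that $X^{-1} = M(i:(-{\bf d}/p,\, 1/p,\, {\bf 0}))$. For $Y \in \M(n,R)$, let $Y_{\mathrm{top}}$ denote the $(i-1) \times n$ block of its first $i-1$ rows and $R_i(Y)$ its $i$-th row. A direct multiplication shows that $X^{-1} Y$ coincides with $Y$ in every row except the $i$-th, where it equals $\tfrac{1}{p}\bigl(R_i(Y) - {\bf d}\, Y_{\mathrm{top}}\bigr)$. Hence
\[
X \mid_l Y \quad \Longleftrightarrow \quad R_i(Y) \equiv {\bf d}\, Y_{\mathrm{top}} \pmod p.
\]

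Second, I would clarify the intended meaning on the right-hand side: $\bar X$ divides $\bar Y$ in $\M(n,R/(p))$ is taken to mean that there exists $\bar Z \in \M(n,R/(p))$ with $\bar X \bar Z = \bar Y$. Since $\bar X$ equals the identity matrix in every row except the $i$-th (where it is $(\bar{\bf d},0,{\bf 0})$), the product $\bar X \bar Z$ coincides with $\bar Z$ in every row except the $i$-th, where it equals $\bar{\bf d}\, \bar Z_{\mathrm{top}}$. Thus $\bar X \bar Z = \bar Y$ forces $R_k(\bar Z) = R_k(\bar Y)$ for $k \ne i$, and then forces $R_i(\bar Y) = \bar{\bf d}\, \bar Y_{\mathrm{top}}$; conversely, if this congruence holds one may freely choose the $i$-th row of $\bar Z$. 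Hence $\bar X \mid \bar Y$ is also equivalent to $R_i(Y) \equiv {\bf d}\, Y_{\mathrm{top}} \pmod p$. Comparing the two equivalences completes the proof.

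The only real subtlety is fixing the definition of divisibility in $\M(n,R/(p))$, since $\bar X$ is a singular matrix; once that is done, both implications fall out from the same explicit row-computation, and no genuine obstacle remains. As an alternative route, the backward implication can also be established more abstractly: lift $\bar Z$ to some $Z_0 \in \M(n,R)$, write $Y = X Z_0 + p W$ for a $W \in \M(n,R)$, and invoke Lemma 8 --- or the direct observation that $p X^{-1}$ is already integral --- to deduce $X \mid_l p W$ and therefore $X \mid_l Y$.
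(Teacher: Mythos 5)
Your proposal is correct, but your main route is genuinely different from the paper's. The paper argues via the adjoint matrix: given $\bar X\bar Z=\bar Y$ in $\M(n,R/(p))$, lift to $XZ=Y+pW$ over $R$ and use $X^*$ with $XX^*=p1_n$ (up to a unit) to write $Y=X(Z-X^*W)$, the condition $\det(Y)\neq0$ guaranteeing $Z-X^*W\in\M(n,R)^\times$; the converse is the trivial reduction mod $p$. This is precisely the ``alternative route'' you sketch in your last sentence. Your primary argument instead reduces \emph{both} sides of the equivalence to one and the same congruence $R_i(Y)\equiv{\bf d}\,Y_{\mathrm{top}}\bmod p$, by explicit computation with $X^{-1}$ on one side and with $\bar X$ on the other; this is the extension of step {\bf 5.}\ of the proof of Theorem 3 from lower-triangular $Z$ to arbitrary $Y$, and it has the merit of producing a concrete row-congruence criterion for divisibility as a by-product. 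What it costs is that you must first put $X$ into the normal form $M(i:({\bf d},p,{\bf 0}))$: the lemma is stated for an arbitrary $p$-irreducible $X$, so you should add the one-line remark that both divisibility relations are unchanged when $X$ is replaced by $XE$ with $E\in\GL(n,R)$ (because $\bar E$ is invertible in $\M(n,R/(p))$), whereas the paper's adjoint argument applies to any $p$-irreducible $X$ verbatim. With that remark inserted, your proof is complete.
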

\begin{proof}  
 Suppose there exists  $Z\!\in\! \M(n,R)$ such that
 $Y\equiv XZ\bmod{p}$. Then, there exists $W\in \M(n,R)$ such that  $XZ=Y+pW$.
 Using $X^*$ (=adjoint of $X$, i.e.\ an element $X^*\in \M(n,R)$ s.t.\ $XX^*=p1_n$), we get the expression $X(Z\!-\!X^*W)\!=\!Y$. Since $\det(Y)\!\not=\!0$, we get 
$\det(Z\!-\!X^*W)\!\not=\!0$, and hence $X\mid_l Y$ in
 $\M(n,R)^\times$. Conversely, if there exists $Z\in \M(n,R)^\times$
 with $Y=XZ$, then $Y\equiv XZ \bmod{p}$.
\end{proof}


\section{\bf \large     Growth function and skew-growth function} 
As an application of the divisibility theory of $\M(n,R)^\times$, we
study the growth and skew-growth function of the monoid $\M(n,R)^\times$.
When $R$ is residue finite, we give a direct proof of their Euler product formula.

We recall the definition of the skew-growth functions
([S4]). Let $M$ be a cancellative monoid with
the unit group $G$, which admits least common multiples.\footnote
{The skew-growth function [S4, \S4] is defined for arbitrary
cancellative monoid without assuming the existence of l.c.m.\ but using towers of common multiple sets. However, the existence of l.c.m.'s implies that  the height of the towers is 1 and we get the present simple formulation.
}
 
A map 
$\deg: M \longrightarrow \R_{\ge0}$ is called a {\it degree map} if it satisfies

\ \ i)  \ $\deg(X)=0$ if and only if $X\in G$,

\ ii)  \ $\deg(XY)=\deg(X)+\deg(Y)$ for all $X,Y\in M$,

iii) \ $\#(\{X\in M \mid \deg(X)\le r\}/G)<\infty$ for all $r\in \R_{>0}$.

\medskip
For a given degree map,  the growth function $P_{M,\deg}(t)$ and the skew growth function $N_{M,\deg}(t)$ are defined as formal Dirichlet series:
\[
\begin{array}{rll}

\vspace{0.1cm}
P_{M,\deg}(t) &:=& \sum_{[X]\in M/G}t^{\deg([X])}, 
\\
N_{M,\deg}(t) & :=&  \sum_{J: \text{ finite subset of }I_0}(-1)^{\#J}\sum  t^{\deg(\LCM(J))},
\end{array}
\vspace{0.1cm}
\]
where $I_0:=$ left equivalence classes of irreducible elements of $M$.
As formal  Dirichlet series, they satisfy the inversion formula ([S4,\S5]) 
\[
P_{M,\deg}(t)\ N_{M,\deg}(t) =1.
\]

 Let us call a domain $R$ to be {\it residue finite}, if  $\#(R/mR)\!<\!\infty$ for all $m\!\in\! R\!\setminus\! \{0\}$.  
\footnote
{ This condition is satisfied by 1) the principal order $R$ of an algebraic number field of class number  1, e.g.\  $R\!=\!\Z$, and 2) the coordinate ring of a smooth affine curve over a finite field.
}
The map $N:R\setminus \{0\}\to \Z_{\ge1}, m\mapsto \#(R/mR)$ is called the {\it absolute norm}. We define the degree map on $\M(n,R)^\times$ by the composition:
\[  
 \deg: = \log\circ N\circ \det \ : \ \ \M(n,R)^\times\longrightarrow\ \  \R_{\!\ge0}
\]
where $\log$ is the logarithmic function taking the branch: $\R_{\ge1}\to \R_{\ge0}$.  
The fact that this map satisfies the condition iii) follows from the following fact.

\medskip
\noindent
{\bf Fact.} {\it If $R$ is residue finite,\! then $\#(\{(m)\!\subset\! R\mid \#R/(m)\!\le\! r\})\! <\! \infty$ for any $r\! \in\! \R_{>\!0}$. }

\medskip
\noindent
{\it Proof} (Kurano).  Suppose the contrary. Then there are infinitely many distinct ideals $I_i\subset R$ such that all quotients $R/I_i$ are isomorphic to a fixed finite field of order, say, $f$. The natural map $R \to  \prod R\! /\! I_i$ is injective since for any $x \! \not= \! 0 \in R$, the ideal $(x)$ is contained in only finite many of  $I_i$'s. For any element $x\! \not=\! 0 \in R$, the image of $x^{f-1}$ in $R\!/\! I_i$ is equal to either $0$ or $1$ so that we have $x^{f-1}(x^{f-1}\! -\!1)\! =\! 0$. Since $x\not\! =\! 0$, the fact that $R$ is a domain implies  $x^{f\!-\! 1}\! - \!1\! =\! 0$. Since $R$ is infinite, taking mutually distinct  elements $x_1,\! \cdots\! ,x_f \!\in \! R\! \setminus\! \{0\}$, we get $x_i^{f\!-\!1}\!\!-\!1\!=\!0$ ($i\!=\! 1,\cdots\!,f$). This contradicts again to the fact that $R$ is a domain. \qquad \qquad  {$\Box$}

\medskip
In the rest of the present paper, we consider only the case when the
principal ideal domain $R$ is residue finite,  and consider only the growth and skew-growth functions associated with the degree map induced from the absolute norm.

\medskip
\noindent
{\bf Formulae.}  Let $R$ be residue finite.
Then, by a change $t=\exp(-s)$ of  variables from $t$ to $s$, the associated growth and skew-growth functions are absolutely convergent on some right half $s$-plane and are given as  analytic functions as follows.
\[
\!\!\!\!\!\!\!\!\!\!\!\!\!\!\!\!\!\!\!\!\!\!\!\!\!\!\!\!\!\!\!\!\!\!\!\!\!\!\!\!\!\!\!\!
{\bf 1)}\quad  
P_{\M(n,R)^\times,\deg}(\exp(-s))\ =\ \ \zeta_R(s)\ \zeta_R(s-1)\ \cdots\ \zeta_R(s-n+1)
\]
\[
\!\!\!\! \!\!  {\bf 2)}\ \   \ 
N_{\M(n,R)^\times,\deg}(\exp(-s))\ =\!\!\!\!\!\!\!\!\!\!\!\!
\prod_{p\in \{\text{primes of $R$}\}/\mathcal{E}}\!\!\!\!\!\!\!\!\!\!\!\!\!\!\!\!
{ (1\!-\!N(p)^{-\!s})(1\!-\!N(p)^{-\!s\!+\!1})\cdots (1\!-\!N(p)^{-\!s\!+\!n\!-\!1})}
\]
where 
$\zeta_R(s):=\sum_{a\in (R\setminus\{0\})/\mathcal{E}} N(a)^{-s}$
is the Dedekind zeta-function,
which is wellknown to be absolutely convergent on a region $\Re(s)\! >\! \exists\sigma_a$ and has the Euler product expression on 
$\prod_{p\in \{\text{primes of $R$}\}/\mathcal{E}}(1-N(p)^{-s})^{-1}$
on the same domain.

\begin{proof}  
1)  By the change of the variable, we rewrite the growth function
\[\begin{array}{ll}
\!\!\!\!\!\!\!\!\!\!\!\!\!\!\!\!\!\!\!\!\!\!\!\!\!\!\!\!\!\!
{\bf 1)'} \quad &\quad P_{\M(n,R)^\times,\deg}(\exp(-s))=\sum_{[X]\in \M(n,R)^\times/\GL(n,R)} N(\det(X)) ^{-s}.
\end{array}
\]
There are two proofs of the zeta function expression 1).
The first one is to regard the expression 1)' as a generalized Epstein zeta function $\zeta_n(1_n,s)$ for the quadratic form $X\in \M(n,R)\mapsto \det(^t\!X1_nX)\!=\!\det(X)^2$ (up to a factor  of 2), then the formula {\bf 1)} follows from classical results  (K.L.\ Siegel [Si], M.\ Koecher [Ko]).     $\Box$

Let us give an alternative elementary proof of {\bf 1)} using the normal forms $M_n$. 
Let $X\in M_n$ be a normal form (\S3) with $\diag(X)=(m_1,\cdots,m_n)$. Then by the definition of the degree map, we have 
\[
t^{\deg(X)}=t^{\log(N(m_1))+\cdots+\log(N(m_n))}=N(m_1)^{\log(t)}\cdots N(m_n)^{\log(t)}. 
\]

\noindent
Then, due to Lemma 2 and in view of $N(m)\!=\!\#(R(m))$ for $m\!\in\! |R|$, we have
\[
\begin{array}{llll}
 P_{\M(n,R)^\times,\deg}(t)\!\!& =&\! \sum_{X\in M_n} t^{\deg(X)} \\
 &=&\! \big( \sum_{m_1\in |R|}N(m_1)^{\log(t)} \big) \\
 &&\!\!\!\!\!   \times \big(\sum_{m_2\in |R|} \big(\sum_{d_{21}\in R({m_2})}N(m_2)^{\log(t)} \big)\big) \\
 && \quad \cdots \\
&&\!\!\!\!\!   \times \big( \sum_{m_n\in |R|} \big(\sum_{d_{n1}\in R(m_n)}\cdots\sum_{d_{n,n-1}\in R(m_n)}N(m_n)^{\log(t)}\big)\big)\\
\\
 &\!\!\!\!\!\!\!\!\!\!\!\!\!\!\!\!\!\!=&\!\!\!\!\!\!\!\!\!\!\!\!\!\!\! \!\!\!\!\!\sum_{m_1\!\in\! M} \!N(m_1\!)^{\log(t)} \!\sum_{m_2\!\in\! M} \!N(m_2\!)^{\log(t)\!+\!1} \!\cdots\!
 \sum_{m_n\!\in\! M} \!N(m_n\!)^{\log(t)\!+\!n\!-\!1},
\end{array}
\]
where we use a fact $\#(R(m))\!=\!N(m)$ for $m\!\in\! M$. 
Recalling the fact $M\simeq (R\!\setminus\!\{0\})/\mathcal{E}$, we have
 $\sum_{m\in |R|}N(m)^{\log(t)}=\zeta_R(-\log(t))$ and, hence, the formula 1).

\medskip
2) There are two proofs of the Euler product formula {\bf 2)}.  

The first proof is, as explained in Introduction, to rewrite the formula
 1)  by the well-known Euler product formula of the Dedekind
 zeta-function, and to apply the inversion formula ([S4]).  

In the following, we present another proof of {\bf 2)}, which uses the structure of common multiples studied in \S5 but does not use the inversion formula.  

Let us, first, show a partial Euler product expansion of skew-growth
 functions in the sense of the following  formula 4) in next Assertion.
 
\begin{asse} For any finite subset $J\subset I_0$, one has an addition formula:
 \[
  \begin{array}{ll}
  \!\!\!\! \!\!  \!\!\!\! \!\! \!\!\!\! \!\! \!\!\!\! \! 
  {\bf 3)} \qquad \qquad & \ \   \ 
\deg\big(\LCM(J)\big) =\sum_{p: \text{\rm  primes of } R} \deg\big(\LCM(J\cap I_{0,p})\big) .
 \end{array}
\]
where $I_{0,p}\! :=$\{left equivalence classes of all $p$-irreducible elements\}.
 Then we get
 \[
 \begin{array}{ll}
  \!\!\!\! \!\! 
   {\bf 4)} \qquad \qquad & \ \   \ 
 N_{M,\deg}(t)=\prod_{p: \text{\rm  primes of } R}\left(\sum_{J\subset I_{0,p}} (-1)^{\#(J)} \ t^{\deg(\LCM(J))}\right) .
\end{array}
 \]
 \end{asse}
 \begin{proof}
 Obviously,  $\LCM(J)\! =\!\LCM\big(\{\LCM(J\cap I_{0,p})\mid p \text{:
  primes  of }R\}\big)$, where, except for finite primes $p$, the
  intersection $J\cap I_{0,p}$ is empty and $\LCM(J\cap I_{0,p})=1_n$. 
  Then, the diagonal part $\diag(\LCM(J))$ of LHS is equal to that of
  RHS, which is given by the componentwise product of diagonal part of $\diag(\LCM(J\cap I_{0,p}))$ for primes
  $p$ of $R$, since they are mutually componentwisely co-prime and we can apply Lemma 6.
Thus, we have  $\det(\LCM(J))=\prod_{p:primes}\det(\LCM(J\cap I_{0,p}))$ and 
 {\bf 3)}.  Then applying  {\bf 3)} to the
  definition of $N_{M,\deg}(t)$, we obtain {\bf 4)}. 
 \end{proof}
 
To get the finer decomposition {\bf 2)}, it  remains  to show the decomposition 
 \[
 \begin{array}{ll}
  \!\!\!\! \!\! \!\!\!\! \!\! \!\!\!\! \!\! \! \!\! \! \!\! 
{\bf 5)} \qquad \qquad & \ \ 
 \sum_{J\subset I_{0,p}} (-1)^{\#(J)} \ t^{\deg(\LCM(J))}=\prod_{i=1}^n(1\!-\!N(p)^{-\!s\!+\!i \!-\! 1})
\end{array}
 \]
for each prime $p$ of $R$, where $-s\!=\!\log(t)$.  

Set $I_{0,p}=\sqcup_{i=1}^n I_{0,p}^{(i)}$ where $I_{0,p}^{(i)}:=\{X\in I_{0,p}\mid X \text{ is of level } i\}$  and $J^{(i)}\!:=\!J\!\cap\! I_{0,p}^{(i)}$ for  $J\subset I_{0,p}$.   We decompose the summation index set of {\bf 5)} as $2^{I_{0,p}}\!\!\!=\!A\!\sqcup \! B$, where $A\!:=\! \{J\! \subset \! I_{0,p}\mid \#(\!J^{(i)}\!)\! \le\!1\  (1\!\le\! \forall i\! \le\!  n)\}$
 and $B\!:=\!2^{I_{0,p}}\!\setminus A$.\footnote{The decomposition $2^{I_{0,p}}=A\cup B$ is ``suggested'' by Lemma 8, where $\LCM(J)$ for any $J\in 2^{I_{0,p}}$ is given by another $LCM(J')$
 for $J\in A$.}
 Then the proof of the formula {\bf 5)} is achieved if we show the following two formulae.
\[
 \begin{array}{lrrl}
\!\! \!\! \!\! \!\! \!\! \!\! \!\! \!\! \!\! \! \!\! \! \!\! \! \!\! 
{\bf 6)} \qquad\qquad &
 \sum_{J\in A} (-1)^{\#(J)} \ t^{\deg(\LCM(J))}\!\!&\!\!=\!\!&\prod_{i=1}^n(1\!-\!N(p)^{-\!s\!+\!i \!-\! 1}),\\
\!\! \!\! \!\! \!\! \!\! \!\! \!\! \!\! \!\! \! \!\! \! \!\! \! \!\!  
{\bf 7)} \qquad\qquad &
 \sum_{J\in B} (-1)^{\#(J)} \ t^{\deg(\LCM(J))}\!\!&\!\!=\!\!& 0.
 \end{array}
 \]

\noindent
{\it Proof of} {\bf 6)}.  Since for any $J\!\in\! A$, elements in $J$  consist of $p$-irreducibles elements of different levels, we can apply Lemma 6 repeatedly. Then,  we get 
 \[
  \begin{array}{ll}
\quad \! {\bf 3')} \quad  &      
\deg\big(\LCM(J)\big) =\deg\big(p^{\#(J)}\big)=\log(N(p)^{\#(J)})=\sum_{i=1}^n\log(N(p)^{\#(J^{(i)})})
. 
 \end{array}
\]
so that, similarly to the formula {\bf 4)}, we get
 \[
 \begin{array}{lllll}
  \!\!\!\! \!\! \!\!\!\!  
   {\bf 4')} \qquad\quad    &
&\sum_{J\in A} (-1)^{\#(J)} t^{\deg(\LCM(J))} \\
&= & \prod_{i=1}^{n}(1 - \sum_{X\in I_{0,p}^{(i)}} N(p)^{-s})  & = & \prod_{i=1}^{n}(1 -  N(p)^{- s + i - 1}),
\end{array}
 \]
 where, in the last step, we use the fact $\#(I_{0,p}^{(i)}) =\#(R(p))^{i-1})=N(p)^{i-1}$. 

\smallskip
\noindent
{\it Proof of} {\bf 7)}.  It is sufficient to show an existence of an involution map $\tau:  B \to B$ satisfying the following conditions: 

\smallskip
 i) $\LCM(J)\!=\LCM(\tau(J))$ \ \ and \ \ ii) $\#(J)+\#(\tau(J))\equiv 1\bmod{2}$ for all $J\in B$, 
 
 \noindent
 since then 
\[ 2\sum_{J\in B}\!\!\! (\!-\!1)^{\#(J)} t^{\deg(\LCM(J))}\!\!=\!\!\sum_{J\in B}\!\!\big ((\!-\!1)^{\#(J)} t^{\deg(\LCM(J))}\!+\!(\!-\!1)^{\#(\tau(J))}  t^{\deg(\LCM(\tau(J)))}\big)\!\!=\!\!0.
\]
If $n=1$, then $B=\emptyset$. Assume $n\ge2$.  We construct the involution $\tau$ by a use of the jump (\S5 Part II).  For $J\in B$, set 
$m\!:=\!\max\{ 2\!\le\!m\!\le\! n \mid \#(J^{(m)})\!\ge2\!\}$. According to \S5 Part II. Lemma 7 and 8, 
we have a decomposition 
$\LCM(J^{(m)})\! =\! \prod_{i\in \{1,\cdots,r\}} M(k_i:{\bf x}_{ k_i})$, where 
$M(k_i:{\bf x}_{k_i})$ ($1\le i\le r$) are mutually commutative $p$-irreducible normal forms of level $k_i$. By the jump phenomenon, we know that $r\ge 2$ and $1\!\le\! k_1<\cdots<k_r=m$, and, in particular,  $k_1<m$.
Then we define
\[
\tau(J):=\begin{cases}
J\sqcup\{M(k_1:{\bf x}_{k_1})\}   &\text{ if } M(k_1:{\bf x}_{k_1})\not\in J\\
J\setminus\{M(k_1:{\bf x}_{k_1})\}    & \text{ if } M(k_1:{\bf x}_{k_1})\in J.
\end{cases}
 \]
 It is clear that $\tau$ defines an involution of $B$. Let us show the
 properties i) and ii) of $\tau$, where ii) is apparent by
 definition. To see ii), let us decompose $J=J^{(m)}\cup J'$ and
 $\tau(J)=J^{(m)}\cup J''$ with
 $J':=J\setminus J^{(m)}$ and $J'':=\tau(J)\setminus J^{(m)}$ for $J\in B$. Then,
\[\begin{array}{rll}
 \LCM(J)\! &\!=\LCM(\{\LCM(J^{(m)}),J'\})=\LCM(\{M(k_i:{\bf x_{k_i}})\
  (i=1,\cdots,r),J'\})\\
 \LCM(\tau(J))\! &\!=\LCM(\{\LCM(J^{(m)}),J''\})=\LCM(\{M(k_i:{\bf
  x_{k_i}})\ (i=1,\cdots,r),J''\})
\end{array}
\]
where both RHS coincides to each other, since only difference between $J'$ and
 $J''$ is that one of them contains the element $M(k_1:{\bf x_{k_i}})$
 and the other does not.

\medskip
This completes the proof of the formula {\bf 7)} and, hence, of  the formula {\bf 2)}.
\end{proof}


\section{\bf \large  Appendix. Irreducible decomposition}   

Any element of $\M(n,R)^\times$ is decomposable into a product of
irreducible elements. However the decomposition is not unique and has a
big variety. In the
present Appendix, we give a decomposition, which is
used in the proof 7) of Theorem 3. 

\begin{lemm} Let $X\! \in\! \M(n,R)^\times$ and let
 $\diag([X])\! =\! (m_1,\cdots,m_n)$ be the diagonal of its normal form.
Let us fix an ordered irreducible decomposition $m_i=\prod_{k=1}^{k_i}
 p_{i,k}$ for each $m_i$ $(i=1,\!\cdots\!,n)$. Then, there exist a unique
 $p_{i,k}$-irreducible normal form $P_{i,k}$ of level $i$ for $1\le i\le
 n$ and $1\le k\le k_i$, and a unit element $E\in
 \GL(n,R)$ such that
\vspace{-0.1cm}
\[
\vspace{-0.1cm}
X  = \Big( \prod_{i=1}^n\prod_{k=1}^{k_i} P_{i,k}\Big) \ E .
\] 
Here the product order  is the lexicographic 
 order of the running index $i$ and $k$.
\end{lemm}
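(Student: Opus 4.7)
My plan is to induct on $K := \sum_{i=1}^n k_i$, the total number of prime factors in the diagonal. By Lemma 2 I first replace $X$ by its normal form $N(X) \in M_n$, absorbing the resulting unit into the final $E$. The base case $K = 0$ forces all $m_i = 1$, hence $N(X) = 1_n$, and the empty product with $E = 1_n$ works uniquely.

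For the inductive step, set $i^* := \min\{i : m_i \neq 1\}$. The structural observation is that because $m_j = 1$ for $j < i^*$, the normal-form conditions (lower triangularity together with $R(1) = \{0\}$) force rows $1, \ldots, i^* - 1$ of $X$ to be identity rows. I claim that there is a unique $p_{i^*, 1}$-irreducible normal form $P_{i^*, 1} = M(i^* : ({\bf d}^*, p_{i^*, 1}, {\bf 0}))$ that left-divides $X$. Indeed, $P_{i^*, 1}^{-1} X$ agrees with $X$ outside row $i^*$, and in row $i^*$ it becomes $p_{i^*, 1}^{-1} \bigl(X_{i^*, *} - \sum_{c < i^*} d^*_c X_{c, *}\bigr)$. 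Since $X_{c, *} = e_c$ for $c < i^*$, integrality reduces componentwise to $d^*_j \equiv d_{i^*, j} \pmod{p_{i^*, 1}}$ for $j < i^*$ (the $j = i^*$ column reads $m_{i^*}$, divisible by $p_{i^*, 1}$, and columns $j > i^*$ in that row vanish). Since $R(p_{i^*, 1})$ is a fixed transversal for $R/(p_{i^*, 1})$, each $d^*_j$ is pinned down uniquely, proving the claim.

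Setting $X' := P_{i^*, 1}^{-1} X$, the diagonal of the normal form of $[X']$ becomes $(1, \ldots, 1, m_{i^*}/p_{i^*, 1}, m_{i^*+1}, \ldots, m_n)$, carrying the naturally truncated factorization $m_{i^*}/p_{i^*, 1} = p_{i^*, 2} \cdots p_{i^*, k_{i^*}}$. Applying the induction hypothesis to $X'$ (with $K$ decreased by one) yields a unique decomposition $X' = \bigl(\prod_{(i, k) > (i^*, 1)} P_{i, k}\bigr) E$, and left-multiplying by $P_{i^*, 1}$ delivers the desired factorization of $X$. Global uniqueness follows because in any competing decomposition $X = \bigl(\prod P'_{i, k}\bigr) E'$ the leading factor $P'_{i^*, 1}$ is again a $p_{i^*, 1}$-irreducible normal form of level $i^*$ left-dividing $X$, so $P'_{i^*, 1} = P_{i^*, 1}$ by the extraction claim, after which the induction hypothesis matches the remaining factors.

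\emph{Main obstacle.} The delicate point is the extraction claim; once the ``identity rows above'' observation is in hand, the congruences for ${\bf d}^*$ decouple componentwise and solve uniquely in the transversal $R(p_{i^*, 1})$. A subsidiary concern is that $X'$ need not itself lie in $M_n$ (its row-$i^*$ subdiagonal entries need not land in $R(m_{i^*}/p_{i^*, 1})$), but since the induction hypothesis applies to any $X' \in \M(n, R)^\times$ and only references the diagonal of its normal form, no explicit renormalization at intermediate stages is required.
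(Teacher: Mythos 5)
Your proof is correct, but it follows a genuinely different route from the paper's. You peel off the \emph{first} factor in the lexicographic order: after normalizing $X$, you observe that the rows above level $i^*$ are identity rows, and you prove an extraction claim --- there is exactly one $p_{i^*,1}$-irreducible normal form of level $i^*$ left-dividing $X$, pinned down by the congruences $d^*_j\equiv d_{i^*,j}\bmod p_{i^*,1}$ solved in the transversal $R(p_{i^*,1})$ --- then induct on the total number of prime factors. This extraction claim is in effect the divisibility criterion of step {\bf 5.} in the proof of Theorem 3 specialized to a lower triangular $X$ with identity rows above level $i^*$, so existence and uniqueness of each factor are established elementwise and simultaneously. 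The paper instead peels off the \emph{last} factor: it considers the whole product set $P=\tilde P\times M(n:p_{n,k_n})$ and shows by induction that the product map $\pi$ puts $P$ in bijection (via left equivalence) with the set $M_n(m_1,\dots,m_n)$ of normal forms with the prescribed diagonal, the key step being that $T=\{{\bf y}+\tilde m_n{\bf z}\}$ surjects bijectively onto $R(m_n)^{n-1}$ modulo $m_n$. So the paper's argument is a set-level counting/bijection argument that never isolates an individual factor, while yours is constructive and identifies each $P_{i,k}$ explicitly by a divisibility test; yours arguably makes the uniqueness more transparent, whereas the paper's makes visible exactly which parameter sets the factors range over. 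Your handling of the two delicate points --- that the normal form of the lower triangular $X'=P_{i^*,1}^{-1}X$ has the truncated diagonal (using multiplicativity of $|R|$), and that no renormalization of $X'$ is needed before reapplying the induction hypothesis --- is sound.
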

\begin{proof} 
We fix a notation: for $1\le i\le n$ and $p\in |R|\subset R$, we set
\[
 M(i:p):=\text{ the set of normal forms of level $i$ with diagonal $p$}.
\]

According to the ordered product of $m_i$,  we consider the ordered product set
\vspace{-0.1cm}
\[
P:=\prod_{k=1}^{k_1} M(1: p_{1,k})\prod_{k=1}^{k_2} M(2: p_{2,k}) \cdots \prod_{k=1}^{k_n} M(n: p_{n,k})
\]
and consider a product map $\pi : P \rightarrow \M(n,R)^\times$
 according to the order.
Then, in view
 of Lemma 2 and the identification $[X]_l=X\cdot \GL(n,R)$, Lemma 10
 is equivalent to the say that {\it the map $\pi$ induces a bijection to a subset
 of $\M(n,R)^\times$ 
 which is in one to one correspondence (by the left equivalence) with the set} 
\[
 M_n(m_1,\cdots,m_n):=\{X\in M_n\mid \diag(X)=(m_1,\cdots,m_n)\}.
\]
We prove the statement by induction on the number
 of factors in $P$. So, consider the product set $\tilde P$ by
 forgetting the last factor from $P$, i.e.\
$P=\tilde{P}\times M(n:p_{n,k_n})$,
 and put $m_n=\tilde{m}_np_{n,k_n}$.  The induction hypothesis says that the image
 $\pi(\tilde{P})$ is bijective to a set which is in one to one
 correspondence with $M_n(m_1,\cdots,m_{n-1},\tilde{m}_n)$.
Precisely, this means that $\pi(\tilde{P})$ consists of elements of the form
 $X=(\prod_{i=1}^{n-1}M(i:({\bf x}_i,m_i,{\bf 0})))M(n:({\bf y},\tilde{m}_n))$ 
for  ${\bf x}_i\in R^{i-1}$ and ${\bf y}\in
 R^{n-1}$ such that the set of their left-equivalence class $\{[X]_l\mid X\in
 \pi(\tilde{P})\}$ is bijective to $M_n(m_1,\cdots,m_{n-1},\tilde{m})$.
That is, ${\bf x}_i$ runs over the set which is (by taking $\bmod\ m_i$)
 bijective to $R(m_i)^{i-1}$ for $1\le i\le n-1$ and 
${\bf y}$ runs over the set $S\subset R^{n-1}$ which is (by taking $\bmod\ \tilde{m}_n$)
 bijective to $R(\tilde{m}_n)^{n-1}$.
We have to show that the set $\pi(P)=\pi(\tilde{P})M(n:p_{n,k_n})=\{XZ\mid X\in
 \pi(\tilde{P}), Z\in M(n:p_{n,k_n})\}\subset \M(n,R)^\times$ is
 (by the left-equivalence) bijective to
 $M_n(m_1,\cdots,m_n)$. Actually, $Z\in M(n:p_{n,k_n})$ is of the form
 $M(n:({\bf z},p_{n,k_n}))$ where ${\bf z}$ is running over the set
 $R(p_{n,k_n})^{n-1}$. Then the product $XZ$ is of the form
\vspace{-0.2cm}
\[
 XZ=\big(\prod_{i=1}^{n-1}M(i:({\bf x}_i,m_i,{\bf 0}))\big)M(n:({\bf
 y}+\tilde{m}_n{\bf z},m_n)). 
\vspace{-0.1cm}
\]
Then, we need to show that the set $T:=\{{\bf y}+ \tilde{m}_n{\bf z} \mid 
{\bf y}\in S, {\bf z}\in R(p_{n,k_n})^{n-1}\}$ is (by taking $\bmod\
 m_n$) bijective to $R(m_n)^{n-1}$. But, this is trivial, since, for each ${\bf w}\in
 R(\tilde{m}_n)^{n-1}$, there exist a unique element ${\bf y}\in S$ and ${\bf
 v}_{{\bf w}}\in R^{n-1}$ such that ${\bf y}={\bf w}+\tilde{m}_n {\bf v}_{ {\bf w}}$. 
Then, changing the index from ${\bf y}$ to ${\bf w}$, we have an expression 
\[
 T=\{ {\bf w}+\tilde{m}_n({\bf v}_{ {\bf w}}+{\bf z})
 \mid {\bf w}\in R(m_n)^{n-1},  {\bf z}\in R(p_{n,k_n})^{n-1}\}.
\]
In view of the exact sequence $0\!\to\! R/(m_n)\!\to\! R/(\tilde{m}_n)\!\to\! R/(p_{n,k_n})\!\to\! 0$, we get $T \bmod m_n=
\{ {\bf w}\!+\!\tilde{m}_n{\bf z}
 \mid {\bf w}\!\in\! R(m_n)^{n-1},  {\bf z}\!\in\! R(p_{n,k_n})^{n-1}\} 
 \bmod m_n = R(m_n)^{n-1} \bmod m_n$. 

This completes a proof of Lemma 10.
\end{proof}

\noindent
{\bf Remark 1.} If $X\in M_n$ in the case $R=\Z$ with \S3 {\bf Eg.},
then $E=1_n$.

\noindent
{\bf 2.} The irreducible factors $P_{i,k}$ depends strongly on the order
of the product.

Here are some examples of different irreducible normal forms decomposition.
\[
 \begin{bmatrix}
1\!&\! 0\\ 
1\!&\! 6
\end{bmatrix}
\! = \!
\begin{bmatrix}
1\!&\!0\\ 
 1\!&\!3
\end{bmatrix}
\begin{bmatrix}
1\!&\!0\\ 
0\!&\! 2
\end{bmatrix}
\!=\!
\begin{bmatrix}
1\!&\!0\\ 
1\!&\!2
\end{bmatrix}
\begin{bmatrix}
1\!&\!0\\ 
 0\!&\! 3
\end{bmatrix}
,\
 \begin{bmatrix}
 1\!&\! 0\\ 4\!&\! 6
\end{bmatrix}
\!=\!
\begin{bmatrix}
1\!&\!0\\ 
1\!&\!3
\end{bmatrix}
\begin{bmatrix}
1\!&\!0\\ 
 1\!&\! 2
\end{bmatrix}
\!=\!
\begin{bmatrix}
1\!&\!0\\ 
0\!&\!2
\end{bmatrix}
\begin{bmatrix}
1\!&\!0\\ 
 2\!&\! 3
\end{bmatrix}
\]
{\bf 3.} The product map: $M_n(m_1,\cdots,m_n)\times
 M_n(l_1,\cdots,l_n) \to M_n(m_1l_1,\cdots,m_nl_n)$, where the target
 set is considered as a
 subset of the quotient set $\M(n,R)/\GL(n,R)$, for general
 $m_i,l_i\!\!\in \!\!
 M$ is neither injective nor surjective.
Eg.\! For $(m_1,m_2)\!\!=\!\!(a,2)$ and $(l_1,l_2)\!\!=\!\!(2,b)$, we have two decompositions 
$
 \begin{bmatrix}
2a\!\!&\!\! 0\\ 
2c\!\! &\!\! 2b
\end{bmatrix}
\!\!=\! \!
\begin{bmatrix}
a\!&\!0\\ 
0\!&\!2
\end{bmatrix}
\!\!
\begin{bmatrix}
2\!&\!0\\ 
c\!&\! b
\end{bmatrix}
\!\! = \!\!
\begin{bmatrix}
a\!&\!0\\ 
 1\!&\!2
\end{bmatrix}
\!\!
\begin{bmatrix}
2\!\!&\!\!0\\ 
 c\!-\!\!1\!\!&\!\! b
\end{bmatrix}
$.
However $
 \begin{bmatrix}
2a\!\!&\!\! 0\\ 
2c\!+\!1\!\! &\!\! 2b
\end{bmatrix}
$ has no decomposition
$
\begin{bmatrix}
a\!&\!0\\ 
*\!&\!2
\end{bmatrix}
\begin{bmatrix}
2\!&\!0\\ 
*\!&\! b
\end{bmatrix}
E \equiv 0 \bmod 2$.
{\it A sufficient condition for the map to be
 bijective is that there exists $1\!\le\! i_0\!\le\! n$ such that $m_i\!=\!1$ for
 $i\! >\! i_0$ and $l_i\! =\! 1$ for $i\! <\! i_0$} (the proof is the same as Lemma
 10). 
The examples show that the lexicographic
 order using levels in Lemma 10 is necessary.
 Once one violate the the ordering of levels, then one loose the uniqueness or existence.


%

\begin{flushright}
\begin{small}
Iinstitute for Physics and Mathematics of Universe, \\
University of Tokyo, 
Kashiwa, Chiba, 277-8568 JAPAN
\end{small}
\end{flushright}

\begin{flushright}
\begin{small}
e-mail address :  kyoji.saito@ipmu.ac.jp
\end{small}
\end{flushright}

\end{document}